   \newtheorem{theorem}{Theorem}[section]
   \newtheorem{lemma}[theorem]{Lemma}
   \newtheorem{coro}[theorem]{Corollary}
\theoremstyle{definition}
   \newtheorem{rema}[theorem]{Remark}
\newcommand{\RR}{{\mathbb{R}}}
\newcommand{\FF}{{\mathbb{F}}}
\newcommand{\QQ}{{\mathbb{Q}}}
\newcommand{\ZZ}{{\mathbb{Z}}}
\newcommand{\one}{{\mathbf{1}}}
\newcommand{\cA}{{\mathcal A}}
\newcommand{\Gr}{\operatorname{Gr}}
\newcommand{\Link}{\operatorname{Link}}
\newcommand{\Spec}{\operatorname{Spec}}
\newcommand{\Star}{\operatorname{Star}}
\newcommand{\Tor}{\operatorname{Tor}}
\newcommand{\isom}{\simeq}
\newcommand{\SL}{{\operatorname{SL}}}
\newcommand{\ev}{{\operatorname{ev}}}
\newcommand{\Po}{Poincar\'e }
\newcommand{\tensor}{\otimes}
\renewcommand{\a}[1]{a_{#1}}
\newcommand{\ap}[1]{a'_{#1}}
\begin{document}
\title{On the anisotropy theorem of Papadakis and Petrotou}

\author{Kalle Karu}

\author{Elizabeth Xiao}
\thanks{This work was partially supported by an NSERC Discovery grant.}
\address{Mathematics Department\\ University of British Columbia \\
  1984 Mathematics Road\\
Vancouver, B.C. Canada V6T 1Z2}
\email{karu@math.ubc.ca, elizabeth@math.ubc.ca}
% \date{Dec 1, 2001}

\keywords{Simplicial homology spheres, pseudo-manifolds, Stanley-Reisner rings, anisotropy, Hard Lefschetz theorem, $g$-conjecture}
\subjclass{13F55, 05E40, 05E45, 14M25}

\begin{abstract}
We study the anisotropy theorem for Stanley-Reisner rings of simplicial homology spheres in characteristic $2$ by Papadakis and Petrotou. This theorem implies the Hard Lefschetz theorem as well as McMullen's $g$-conjecture for such spheres. Our first result is an explicit description of the quadratic form. We use this description to prove a conjecture stated by Papadakis and Petrotou. All anisotropy theorems for homology spheres and pseudo-manifolds  in characteristic $2$ follow from this conjecture. Using a specialization argument, we prove anisotropy for certain  homology spheres over the field $\QQ$. These results provide another self-contained proof of the $g$-conjecture for homology spheres in characteristic $2$.
\end{abstract}

\maketitle

\section{Introduction}

McMullen's g-conjecture \cite{McMullen-g}  characterizes all possible face numbers of simplicial polytopes $\Delta$. The sufficiency part of the conjecture was proved by Billera and Lee \cite{BilleraLee}. Stanley \cite{StanleyHL} proved the necessity by applying the Hard Lefschetz theorem to the cohomology ring $H(\Delta)$. The Hard Lefschetz theorem is traditionally proved together with the Hodge-Riemann bilinear relations, which state that a quadratic form is positive definite on the primitive cohomology. Since the ground field is assumed to be $\RR$, this is equivalent to the quadratic form on the primitive cohomology being anisotropic with the positive sign.

When trying to generalize the g-conjecture from simplicial polytopes to simplicial homology spheres, one is faced with the fact that there is no convexity and hence no positivity for the Hodge-Riemann relations. Proving Hard Lefschetz without Hodge-Riemann  relations is very hard (e.g. see \cite{Adiprasito}). However, in order to deduce Hard Lefschetz from Hodge-Riemann relations, one does not need positivity. Indeed, anisotropy of the quadratic form is sufficient. Papadakis and Petrotou \cite{PapadakisPetrotou} prove a very strong version of anisotropy of the quadratic form on not just the primitive cohomology but the whole middle degree cohomology. This theorem is the motivation for all results in the current article.

The theorem of Papadakis and Petrotou applies to simplicial homology spheres $\Delta$ over a field $k$ of characteristic $2$. The Stanley-Reisner ring $\cA(\Delta)$ and the cohomology ring $H(\Delta) = \cA(\Delta)/(\theta_1,\ldots,\theta_n)$ are defined over a larger field $K=k(\a{i,j})$ of rational functions in the variables $\a{i,j}$. The variables $\a{i,j}$ here are the coefficients of the linear parameters $\theta_1,\ldots, \theta_n$ in the definition of $H(\Delta)$. 

\begin{theorem}[Papadakis, Petrotou] \label{thm-PP}
Let $\Delta$ be a simplicial homology sphere of dimension $n-1=2m-1$ over a field $k$ of characteristic $2$. Let $H(\Delta)$ be defined over the field of rational functions $K=k(\a{i,j})$. Then the quadratic form defined on the middle degree cohomology $H^m(\Delta)$ by multiplication 
\[ Q(g) = g^2 \in H^{n}(\Delta) \isom K\]
 is anisotropic.
\end{theorem}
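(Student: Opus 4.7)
The plan exploits the two special features of the setup: characteristic $2$ turns squaring into a Frobenius-semilinear map, and the genericity of the coefficients $\a{i,j}$ provides a rich field of transcendentals over which one can differentiate. I would begin by choosing the Stanley--Reisner monomials $x_F$ indexed by the $(m-1)$-faces $F$ of $\Delta$ as a spanning set of $H^m(\Delta)$, and by fixing the Poincar\'e-duality isomorphism $H^n(\Delta)\isom K$ via a choice of fundamental class. The first technical step is to derive an explicit formula for $Q(x_F) = x_F^2 \in K$ as a rational function in the $\a{i,j}$. Because $x_F x_G$ is combinatorially supported on pairs $(F,G)$ with $F\cup G$ a facet of $\Delta$, such a formula should express $Q(x_F)$ as a sum over facets $\sigma\supset F$ of a determinantal expression coming from the change of basis between $\{x_\sigma\}$ and a standard generator of the one-dimensional socle $H^n(\Delta)$.

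With this in hand, the key structural remark is that for $g=\sum_F c_F x_F \in H^m(\Delta)$ with $c_F\in K$, one has
\[
Q(g) \;=\; \sum_F c_F^2\, Q(x_F),
\]
and every scalar $c_F^2$ lies in the subfield $K^2=k(\a{i,j}^2)$ of $K$. Anisotropy thereby becomes the statement that the only $K^2$-linear relations among the values $Q(x_F)\in K$ are those induced by the linear parameters $\theta_1,\dots,\theta_n$; equivalently, the Frobenius-semilinear map $H^m(\Delta)\to K$, $g\mapsto Q(g)$, is injective.

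To extract this, I would apply the derivations $\partial/\partial \a{i,j}$, which vanish identically on $K^2$ and so annihilate every $c_F^2$. A hypothetical identity $Q(g)=0$ therefore yields, by iterated differentiation, a family of $K$-linear relations $\sum_F c_F^2 \cdot D\bigl(Q(x_F)\bigr)=0$ as $D$ ranges over monomial differential operators in the $\a{i,j}$. The plan is to combine the explicit formula with an induction on the half-dimension $m$, using that the link of a vertex of $\Delta$ is again a simplicial homology sphere of smaller dimension and that the quadratic form should restrict in a controlled way to links, in order to solve this system and conclude $g=0$ in $H^m(\Delta)$.

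The main obstacle is reconciling the differentiation argument with the kernel of $\cA(\Delta)^m \to H^m(\Delta)$: the $K^2$-linear dependences one must exclude are precisely those arising from the linear parameters $\theta_1,\dots,\theta_n$, and it is easy either to overlook such relations or to re-introduce spurious ones upon differentiating. The subsidiary difficulty is that the explicit formula for $Q(x_F)$ must be rigid enough that its iterated $\a{i,j}$-derivatives remain combinatorially tractable and interact correctly with the link construction in the inductive step; that is where the bulk of the technical work is likely to live.
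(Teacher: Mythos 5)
You have correctly identified the two load-bearing features of the setup --- the Frobenius-semilinearity of squaring in characteristic $2$, so that $Q(\sum_F c_F x_F)=\sum_F c_F^2\,Q(x_F)$ with $c_F^2\in K^2$, and the availability of derivations $\partial_{\a{i,j}}$ that annihilate $K^2$ --- and you are right that one needs an explicit formula for the socle map $W_\Delta$ in the $\a{i,j}$. This much matches the paper. But there is a genuine gap at the step ``solve this system and conclude $g=0$'': differentiating the hypothetical identity $\sum_F c_F^2\,Q(x_F)=0$ produces a large and a priori unstructured family of $K$-linear relations among the $c_F^2$, and nothing in your sketch explains why these relations force $g=0$ in $H^m(\Delta)$ rather than merely constrain $g$ within some subspace compatible with the ideal $(\theta_1,\dots,\theta_n)$. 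The missing ingredient is precisely Theorem~\ref{thm-PPconj}: for a differential operator $\partial_I=\partial_{\a{1,i_1}}\cdots\partial_{\a{n,i_n}}$ of the specific shape $|I|=n$ one has
\[
\partial_I\,W_\Delta(x_J)\;=\;\bigl(W_\Delta(\sqrt{x_I x_J})\bigr)^2
\]
when the square root exists, and $0$ otherwise. Feeding in $g^2$ for $x_J$ (and $|J|=n-2m=0$ in your middle-degree case) turns $\partial_I Q(g)=0$ into $\bigl(W_\Delta(g\,\sqrt{x_I})\bigr)^2=0$, i.e.\ $g$ is Poincar\'e-orthogonal to every degree-$m$ monomial $\sqrt{x_I}$; since $\Delta$ is a homology sphere the Poincar\'e pairing on $H^m(\Delta)$ is nondegenerate, so $g=0$. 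That the differentiated values are perfect squares of Poincar\'e pairings is exactly what makes the argument close and automatically handles the interaction with the ideal $(\theta_i)$; without it, the differentiation strategy does not terminate.

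Your proposed induction via vertex links is also not the mechanism the paper uses, and it is likely to be hard to run: the quadratic form does not restrict cleanly to a quadratic form on a link because the socle degrees drop by two and the middle degree is not preserved. The paper instead decomposes $\Delta$ as a connected sum $\Delta=\#_{i=1}^M \Pi_i$ of boundary spheres $\Pi_i=\partial(\{v_0\}*\sigma_i)$ of $n$-simplices (cone off a new vertex $v_0$), which makes the mixed volume literally additive, $W_\Delta=\sum_i W_{\Pi_i}$, and reduces everything to a single simplex boundary $\Pi$. For $\Pi$, $W_\Pi$ is an explicit evaluation at the signed maximal minors $X_j$ of the $n\times(n+1)$ coefficient matrix, and Theorem~\ref{thm-PPconj} is verified there by an $\SL(n)$-invariance argument together with Pl\"ucker and incidence relations among the $X_j$ and $Y_{i,j}$. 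If an induction is wanted, it is on the size of that matrix (delete a row and a column of $A$), not on $\dim\Delta$ through links.
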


Papadakis and Petrotou used Theorem~\ref{thm-PP}  to prove the Hard Lefschetz theorem for all simplicial homology spheres in characteristic $2$,  in both even and odd dimensions. The Hard Lefschetz theorem then implies the g-conjecture for such spheres. 

Our first result in this article is an explicit description of the quadratic form $Q$ that holds in any characteristic (Theorem~\ref{thm-Q} below). We use this description to prove in Theorem~\ref{thm-PPconj} a conjecture stated in \cite{PapadakisPetrotou} that generalizes the main ingredient in the proof of Theorem~\ref{thm-PP}. As an application of the conjecture, we prove anisotropy in all degrees $m\leq n/2$.

One can define the Hodge-Riemann type quadratic form in any degree. Let
\[ l = x_1 + x_2 + \ldots + x_N \in H^1(\Delta),\]
where $\Delta$ has $N$ vertices and $x_1,\ldots, x_N$ are the corresponding variables in the Stanley-Reisner ring. 
The quadratic form $Q_l$ on $H^m(\Delta)$ for $m\leq n/2$ is defined by
\[ Q_l(g) = l^{n-2m} g^2 \in H^n(\Delta) \isom K.\]
%This form is defined for both even and odd $n$. 

\begin{theorem} \label{thm-lef}
Let $\Delta$ be a simplicial homology sphere of dimension $n-1$ over a field $k$ of characteristic $2$, and let $H(\Delta)$ be defined over the field of rational functions $K=k(\a{i,j})$. Then the quadratic form $Q_l$ is anisotropic on $H^m(\Delta)$ for any $m\leq n/2$. 
\end{theorem}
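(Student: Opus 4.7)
I would deduce Theorem~\ref{thm-lef} from Theorems~\ref{thm-PP} and~\ref{thm-PPconj} using the additivity of squaring in characteristic~$2$. Writing $n - 2m = 2p + \epsilon$ with $\epsilon \in \{0, 1\}$, we have the identity
\[ Q_l(g) = l^{n-2m} g^2 = l^{\epsilon} (l^p g)^2 \in H^n(\Delta) \simeq K. \]
When $\epsilon = 0$, the right-hand side is $Q(l^p g)$, with $Q$ the middle-degree form on $H^{n/2}(\Delta)$ from Theorem~\ref{thm-PP}; when $\epsilon = 1$, an extra factor of $l$ remains outside the square.

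\textbf{Even case ($\epsilon = 0$, so $n$ is even).} Since $Q$ is anisotropic by Theorem~\ref{thm-PP}, $Q_l(g) = 0$ forces $l^p g = 0$. Anisotropy of $Q_l$ thus reduces to injectivity of multiplication by $l^p : H^m(\Delta) \to H^{n/2}(\Delta)$ over $K = k(\a{i,j})$. I would extract this from Theorem~\ref{thm-PPconj}: its explicit formula for products in $H^*(\Delta)$ should express $l^p g$ as a combination indexed by the facets of $\Delta$ with coefficients in $K$, and algebraic independence of the $\a{i,j}$ over $k$ prevents such a combination from vanishing unless $g$ already does. Equivalently, for each nonzero $g$ one should exhibit an explicit partner $h \in H^{n-m-p}(\Delta)$ with $l^p g \cdot h \neq 0$ in $H^{n/2}$, forcing $l^p g \neq 0$ by Poincar\'e duality.

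\textbf{Odd case ($\epsilon = 1$, so $n$ is odd).} Here $Q_l(g) = l (l^p g)^2$ and the Frobenius shortcut does not land in the middle degree of $\Delta$. I would apply Theorem~\ref{thm-PPconj} directly: its generalization of PP's formula should express $Q_l(g)$, for any $m \leq n/2$, as a sum of squared rational expressions in the $\a{i,j}$ (multiplied by $l$ when $\epsilon = 1$), with no mutual cancellation over the generic field $K$. A tempting alternative is to reduce to the even case via the suspension $\Sigma\Delta$, whose Stanley--Reisner ring is $\cA(\Delta)[u,v]/(uv)$; however, one must choose the extra linear parameter generically, because non-generic choices (such as $\theta = u + v$) collapse the ring to $H(\Delta)[u]/(u^2)$, where every middle-degree element squares to zero and anisotropy is destroyed. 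This subtlety suggests that the direct application of Theorem~\ref{thm-PPconj} is the cleaner route.

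\textbf{Main obstacle.} The crux is converting Theorem~\ref{thm-PPconj} into the sharp injectivity and non-vanishing statements above. The guiding principle is that in characteristic $2$ distinct squares in $K = k(\a{i,j})$ cannot cancel. Uniformly extending PP's middle-degree no-cancellation mechanism to off-middle degrees---and carefully tracking the extra factor of $l$ in the odd case---is where the main technical effort will lie.
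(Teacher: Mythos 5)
Your plan identifies the right raw materials but leaves unresolved precisely the step that the paper's argument is designed to supply, and it misattributes to Theorem~\ref{thm-PPconj} a kind of statement it does not make. Two concrete issues:

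\textbf{The even case is incomplete.} You correctly reduce $Q_l(g)=0$ to $l^p g = 0$ via Theorem~\ref{thm-PP} when $n-2m=2p$, but then need injectivity of $l^p:H^m(\Delta)\to H^{n/2}(\Delta)$, which is a weak Lefschetz statement and is not free. Your suggestion to ``extract this from Theorem~\ref{thm-PPconj}'' via ``algebraic independence of the $\a{i,j}$ prevents cancellation'' is not a mechanism: Theorem~\ref{thm-PPconj} is not a formula for products in $H^*(\Delta)$ indexed by facets; it is the identity $\partial_I W_\Delta(x_J) = (W_\Delta(\sqrt{x_I x_J}))^2$ for partial derivatives of the mixed volume functional. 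The content of the paper's proof is exactly how one squeezes a Lefschetz/anisotropy consequence out of this identity. The step you label the ``main obstacle'' \emph{is} the proof; the paper resolves it with Corollary~\ref{cor-reduce}, an iterative halving lemma stating that $l^p g^2 = 0$ in $\overline{H}^{p+2m}$ forces $l^{\lfloor p/2\rfloor} g = 0$ in $\overline{H}^{\lfloor p/2\rfloor + m}$. Starting from $l^{n-2m}g^2 = 0$ and iterating (multiplying the conclusion by $g$ at each step) drives the exponent to zero and yields $g=0$, uniformly in $m$ and in the parity of $n$, with no appeal to Theorem~\ref{thm-PP} at all.

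\textbf{The odd case conflates two different results.} The formula expressing $Q_l$ as a sum indexed by facets is Theorem~\ref{thm-Q}, $Q_l(g)=\sum_i c_{\Pi_i}^{-1}[\ev_{\Pi_i}(l)]^{n-2m}[\ev_{\Pi_i}(g)]^2$, not Theorem~\ref{thm-PPconj}. That diagonal formula is available, but the coefficients $d_i = [\ev_{\Pi_i}(l)]^{n-2m}/c_{\Pi_i}$ are not squares in $K$ (the $c_{\Pi_i}$ are products of distinct irreducible determinants $X_j$), so the characteristic-$2$ fact that a sum of squares is a square does not directly forbid the form from vanishing. The genuine mechanism is differentiating the value $W_\Delta(l^{n-2m}g^2 x_J)$ by $\partial_I$: Corollary~\ref{cor-PPconj} converts the hypothesis $l^p g^2 = 0$ into Poincar\'e-pairing vanishing $W_\Delta(l^q g\,\sqrt{x_I x_J})=0$ for all admissible $I,J$, and since the monomials $\sqrt{x_I x_J}$ can be chosen to realize any monomial of the complementary degree, $l^q g = 0$ follows. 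Your instinct about squares not cancelling and your caution about the suspension are sound, but without the derivative identity and the halving corollary the plan cannot be completed.
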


Theorem~\ref{thm-lef} can be deduced from Theorem~\ref{thm-PP} using induction on the dimension of the sphere and Hard Lefschetz theorem \cite{PapadakisPetrotou}. However, Theorem~\ref{thm-lef} is also a simple application of the conjecture in \cite{PapadakisPetrotou}. 
%The conjecture is indeed well suited for proving all types of anisotropy results in characteristic $2$. 
Note also that Theorem~\ref{thm-lef} directly implies the Hard Lefschetz theorem, which is equivalent to the form $Q_l$ being nondegenerate on $H^m(\Delta)$ for $m \leq n/2$.

The explicit description of the quadratic form allows us to use a specialization argument to show that anisotropy in characteristic $2$ implies the same in characteristic $0$ over the field $k=\QQ$. 

\begin{theorem} \label{thm-2to0}
Let $\Delta$ be a simplicial homology sphere over the field $\FF_2$. Then Theorem~\ref{thm-lef} holds when $H(\Delta)$ is defined over the field $K_0 = \QQ(\a{i, j})$. 
\end{theorem}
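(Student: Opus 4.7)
The plan is a specialization argument from characteristic $0$ to characteristic $2$, via the $2$-adic valuation on $K_0 = \QQ(\a{i,j})$. Let $\cO \subset K_0$ denote the discrete valuation ring of rational functions $p/q$ with $p, q \in \ZZ[\a{i,j}]$ and $q$ of content coprime to $2$; its maximal ideal is $(2)$ and residue field $\cO/2\cO$ is $\FF_2(\a{i,j}) = K$. The Stanley--Reisner ring $\cA(\Delta)_\cO$ is a free $\cO$-module of finite rank in each degree (with basis the face monomials), so $H^m(\Delta)_\cO := \cA^m(\Delta)_\cO/(\theta)\cA^{m-1}(\Delta)_\cO$ is a finitely generated $\cO$-module; by right-exactness of tensor product, $H^m(\Delta)_\cO \otimes_\cO K_0 = H^m(\Delta)_{K_0}$ and $H^m(\Delta)_\cO \otimes_\cO K = H^m(\Delta)_K$.

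Suppose for contradiction that a nonzero $g \in H^m(\Delta)_{K_0}$ satisfies $Q_l(g) = 0$. Since $\bigcap_n 2^n H^m(\Delta)_\cO = 0$ by Krull's intersection theorem, after multiplying $g$ by a suitable integer power of $2$ (which preserves the isotropy, as $Q_l$ is quadratic) we may arrange $g \in H^m(\Delta)_\cO \setminus 2H^m(\Delta)_\cO$. The reduction $\bar g \in H^m(\Delta)_\cO/2H^m(\Delta)_\cO = H^m(\Delta)_K$ is then nonzero.

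To obtain a contradiction with Theorem~\ref{thm-lef} in characteristic $2$, it remains to show $Q_l(\bar g) = 0$. Here we invoke the explicit formula of Theorem~\ref{thm-Q}, which expresses $Q_l(\tilde g)$ for a lift $\tilde g \in \cA^m(\Delta)_\cO$ of $g$ as a rational expression in the coefficients of $\tilde g$ and in the $\a{i,j}$, with integer coefficients and valid in any characteristic. The vanishing $Q_l(\tilde g) = 0$ in $K_0$ therefore reduces modulo $2$ to $Q_l(\bar g) = 0$ in $K$, contradicting the anisotropy of $Q_l$ over $K$.

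The main obstacle is precisely the integrality of the formula from Theorem~\ref{thm-Q}: without a characteristic-free, integer-coefficient expression, the reduction modulo $2$ could be spoiled by factors of $2$ appearing in denominators. A subsidiary point that must be verified is the identification $H^m(\Delta)_\cO/2H^m(\Delta)_\cO = H^m(\Delta)_K$ that guarantees $\bar g \neq 0$ in $H^m(\Delta)_K$; this follows directly from the right-exactness observation in the first paragraph.
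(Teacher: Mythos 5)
Your proof is correct and takes essentially the same specialization-mod-$2$ route as the paper: normalize an isotropic $g$ so that it reduces nontrivially mod $2$, then use the integrality of the diagonal formula from Theorem~\ref{thm-Q} to conclude the reduction remains isotropic, contradicting Theorem~\ref{thm-lef} over $K$. The technical difference is in how the nonvanishing of $\bar{g}$ is arranged. The paper proves a separate lemma showing that a monomial basis for $H^m(\Delta)_K$ remains a basis for $H^m(\Delta)_{K_0}$ (via freeness of $\cA(\Delta)$ over $K[\theta_1,\ldots,\theta_n]$ and a mod-$2$ linear independence argument), and then writes $g$ in that basis with $\ZZ[\a{i,j}]$-coefficients not all divisible by $2$. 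You instead work in the finitely generated $\cO$-module $H^m(\Delta)_\cO$, use right-exactness of $\otimes$ to identify the two specializations, and Krull's intersection theorem to normalize a lift so it lies outside $2H^m(\Delta)_\cO$. The two mechanisms achieve the same normalization; yours is a bit more abstract, while the paper's monomial basis lemma is more explicit (and has the mild side benefit of exhibiting a concrete common basis). Two small points worth making explicit in your write-up: (i) the denominators $c_{\Pi_i}$ in Theorem~\ref{thm-Q} are primitive polynomials over $\ZZ$, hence units in $\cO$, which is precisely what makes the reduction $Q_l(\bar g)=Q_l(g)\bmod 2$ legitimate (the paper states this in the paragraph after Theorem~\ref{thm-Q}); and (ii) the auxiliary cone variables $\a{i,0}$ appearing in the summands of that formula do not affect the form itself, so the computation really does live over $K$ and $K_0$ as claimed (also noted in the paper after Theorem~\ref{thm-Q}).
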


Let us denote by $HS(R)$ the set of simplicial homology spheres over a coefficient ring $R$ (see Section~\ref{sec-SR} for definition). Then we have a sequence of  inclusions
\[ \{\text{Topological spheres}\} \subset HS(\ZZ) \subset HS(\FF_2) \subset HS(\QQ).\]
All theorems stated above apply to homology spheres over $\FF_2$, hence they also apply to topological spheres and integral homology spheres. The cohomology ring $H(\Delta)$ defined over a field $K_0$ of characteristic $0$ is well-behaved when $\Delta$ is a homology sphere over $\QQ$, but the anisotropy problem in this case remains open.
 
 The conjecture in \cite{PapadakisPetrotou} and the anisotropy theorems are more naturally stated for pseudo-manifolds $\Delta$. Theorem~\ref{thm-PP} for pseudo-manifolds in characteristic $2$ was proved by  Adiprasito, Papadakis and Petrotou \cite{APP}. We will work everywhere below in the generality of pseudo-manifolds.

\subsection{Outline of the article} Our main tool in the proofs of anisotropy is the mixed volume $W_\Delta$. This is the linear function on the space of degree $n$ homogeneous polynomials:
\[ W_\Delta: K[x_1,x_2,\ldots,x_N]_n \to H^n(\Delta) \stackrel{\isom}{\longrightarrow} K.\]
The mixed volume determines the ring $H(\Delta)$ if $\Delta$ is a simplicial homology sphere, and in particular it determines the quadratic form $Q_l$ on  $K[x_1,x_2,\ldots,x_N]_m$:
\[ Q_l(g) = W_\Delta(l^{n-2m} g^2).\]
 In Section~\ref{sec-mixed} we prove a decomposition theorem for mixed volumes. If $\Delta$ decomposes as a connected sum, $\Delta = \Delta_1 \# \Delta_2$, then the mixed volume also decomposes, 
\[ W_\Delta = W_{\Delta_1}+ W_{\Delta_2}.\]
 We decompose $\Delta = \#_{i=1}^M \Pi_i$ into a connected sum where each $\Pi_i$ is the boundary sphere of an $n$-simplex. This decomposition provides an explicit diagonal formula for the quadratic form $Q_l$ that is valid in any characteristic. In Section~\ref{sec-consec} we use the formula to specialize the quadratic form from characteristic $0$ to characteristic $2$.  
 
The decomposition of the mixed volume is compatible with the conjecture of Papadakis and Petrotou, reducing the conjecture to the case of $\Pi_i$. We prove the conjecture and the anisotropy theorems in Section~\ref{sec-conjecture}.

We start the next section by recalling the definitions of simplicial homology spheres, Stanley-Reisner rings, and Brion's construction of the isomorphism $H^n(\Delta) \isom K$.

\section{Stanley-Reisner rings}
\label{sec-SR}

We work over a field $k$ of any characteristic in this section.
Let $\Delta$ be a (finite, abstract) simplicial complex of dimension $n-1$. We write $\Delta_d$ for the set of $d$-dimensional simplices of $\Delta$. The complex $\Delta$ is called pure if all its maximal simplices have the same dimension $n-1$, which we call the dimension of $\Delta$.

\subsection{Homology spheres} A pure simplicial complex $\Delta$ of dimension $n-1$ is a homology sphere over a coefficient ring $R$ if for every simplex $\tau \in \Delta$ the link of $\tau$ has the same reduced homology as a sphere of dimension $n-2-\dim \tau$:
\[ \tilde{H}_i (\Link\tau; R) = \begin{cases} R & \text{if $i=n-2-\dim\tau$},\\
0 & \text{otherwise.} \end{cases}
\]
 The homology here is the simplicial homology with coefficients in the ring $R$. The condition also needs to hold for the empty simplex that has dimension $-1$. 
 
Stanley-Reisner rings are defined over a field, and the theory works best for homology spheres $\Delta$ over the same field (in this case the algebra $H(\Delta)$ is Gorenstein by Reisner's theorem). The condition for a simplicial complex $\Delta$ to be a homology sphere over a field $k$ only depends on the characteristic of the field and not on the field itself. We clarify here the relationship between homology spheres over different coefficient rings. The following result is elementary.

\begin{lemma}
Let $\Delta$ be a pure simplicial complex of dimension $n-1$. 
\begin{enumerate}
\item If $\Delta$ is a homology sphere over $\ZZ$, then it is a homology sphere over any ring $R$.
\item If $\Delta$ is a homology sphere over $\FF_p$ for some prime $p$, then it is a homology sphere over $\QQ$.
\end{enumerate}
\end{lemma}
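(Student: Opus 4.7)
The plan is to reduce both parts to computations with the Universal Coefficient Theorem (UCT) applied to the integral reduced simplicial homology of each link. Fix a simplex $\tau \in \Delta$ and set $d = n - 2 - \dim\tau$. I then need to determine $\tilde{H}_i(\Link\tau; R)$ for arbitrary $R$ in part (1), and $\tilde{H}_i(\Link\tau; \QQ)$ in part (2), from the given hypothesis.

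Part (1) is immediate: if $\tilde{H}_*(\Link\tau; \ZZ)$ is $\ZZ$ in degree $d$ and zero otherwise, it is free abelian in every degree, so all $\Tor_1^{\ZZ}(-, R)$ terms vanish and the split UCT short exact sequence collapses to $\tilde{H}_i(\Link\tau; R) \cong \tilde{H}_i(\Link\tau; \ZZ) \otimes R$, which has exactly the required shape.

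Part (2) is the less trivial one. Since $\QQ$ is flat over $\ZZ$, it suffices to show that $\tilde{H}_i(\Link\tau; \ZZ)$ has free rank $1$ in degree $d$ and free rank $0$ elsewhere. I would run the UCT with $\FF_p$ coefficients and extract constraints on both the free rank and the $p$-primary torsion of $\tilde{H}_i(\Link\tau; \ZZ)$ from the given vanishing or nonvanishing of $\tilde{H}_i(\Link\tau; \FF_p)$. Proceeding in descending degree, vanishing at $i = d+1$ forces both the rank in degree $d+1$ and the $p$-primary torsion in degree $d$ to vanish. The single $\FF_p$-class in degree $d$ must therefore be produced either by a rank-one free part of $\tilde{H}_d(\Link\tau; \ZZ)$ or by a $\Tor_1^{\ZZ}$-contribution from a $p$-primary summand of $\tilde{H}_{d-1}(\Link\tau; \ZZ)$; vanishing of $\tilde{H}_{d-1}(\Link\tau; \FF_p)$ excludes the second alternative. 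Continuing the same analysis downward eliminates all free rank in lower degrees, and tensoring with $\QQ$ yields the desired conclusion.

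The main (and only) obstacle I anticipate is the standard delicacy of this UCT bookkeeping: ensuring that the class generating $\tilde{H}_d(\Link\tau; \FF_p)$ is not absorbed entirely into a $\Tor_1^{\ZZ}$-contribution from $p$-primary torsion one degree below, which would leave $\tilde{H}_d(\Link\tau; \ZZ)$ with free rank $0$. The neighbouring-degree vanishing hypothesis in degree $d-1$ removes this ambiguity, and the rest is routine.
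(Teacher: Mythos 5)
Your proof is correct, and part (1) coincides with the paper's. For part (2) you run the Universal Coefficient Theorem with $\FF_p$ coefficients degree by degree to extract the free rank and $p$-primary torsion of the integral homology, whereas the paper instead uses the long exact sequence coming from the short exact sequence of chain complexes $0 \to \tilde{C}_\cdot(\ZZ) \xrightarrow{\,\cdot p\,} \tilde{C}_\cdot(\ZZ) \to \tilde{C}_\cdot(\FF_p) \to 0$ (the Bockstein sequence). The two are encoding the same information, but the emphasis is different: the paper's argument first deduces that $\tilde{H}_i(\Delta;\ZZ)$ is finite with no $p$-torsion for $i<n-2$, and pins down $\tilde{H}_{n-2}(\Delta;\ZZ)=\ZZ$ exactly using that the top homology of a simplicial complex is a subgroup of a free abelian group; your version only chases free ranks, which is all that is needed after tensoring with the flat module $\QQ$, so it is slightly leaner. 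One small remark: in the top degree $d=n-2-\dim\tau$, the vanishing $\tilde{H}_{d+1}(\Link\tau;\FF_p)=0$ that you invoke holds trivially because the link has dimension $d$; you could equivalently use, as the paper does, that $\tilde{H}_d(\Link\tau;\ZZ)$ is automatically torsion-free as a subgroup of $\tilde{C}_d$, which sidesteps the $\Tor_1$ step there. Either way, the bookkeeping you describe (ruling out the possibility that the mod-$p$ class in degree $d$ is entirely a $\Tor_1$ contribution from degree $d-1$ via the vanishing $\tilde{H}_{d-1}(\Link\tau;\FF_p)=0$) is exactly the key point and it goes through.
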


\begin{proof}
We will consider the homology $H_i(\Link\tau; R)$ when $\tau=\emptyset$, $\Link\tau=\Delta$. The case of general $\tau$ is similar.

The first statement follows from the universal coefficient theorem which gives an exact sequence
\[ 0\to \tilde{H}_i (\Delta; \ZZ) \tensor R \stackrel{\nu}{\to} \tilde{H}_i (\Delta; R) \to \Tor_1(\tilde{H}_{i-1} (\Delta; \ZZ), R) \to 0.\]
If $\Delta$ is a homology sphere over $\ZZ$, then the $\Tor_1$ term vanishes for all $i$. Hence $\nu$ is an isomorphism.

For the second statement, consider the exact sequence
\[  0\to \tilde{C}_\cdot(\Delta;\ZZ) \stackrel{\mu}{\to} \tilde{C}_\cdot(\Delta;\ZZ) \to \tilde{C}_\cdot(\Delta;\FF_p) \to 0,\]
where $\tilde{C}_\cdot(\Delta;R)$ is the augmented simplicial chain complex with coefficients in $R$, and the map $\mu$ is multiplication by $p$. The short exact sequence of complexes gives a long exact sequence of homology groups. For $i< n-2$ we get an isomorphism
\[ \tilde{H}_i(\Delta;\ZZ) \stackrel{\mu}{\to} \tilde{H}_i(\Delta;\ZZ).\]
Since the homology groups are finitely generated abelian groups, it follows that $\tilde{H}_i(\Delta;\ZZ)$ is a finite group with no $p$-torsion. 

For $i=n-1$ we get an exact sequence 
\[  0\to \tilde{H}_i(\Delta;\ZZ) \stackrel{\mu}{\to} \tilde{H}_i(\Delta;\ZZ) \to \tilde{H}_i(\Delta;\FF_p) \to \tilde{H}_{i-1}(\Delta;\ZZ) \stackrel{\mu}{\to} \tilde{H}_{i-1}(\Delta;\ZZ) \to 0.\]
The right map $\mu$ being surjective implies that $\tilde{H}_{i-1}(\Delta;\ZZ)$ is a finite abelian group with no $p$-torsion. In particular, the right map $\mu$ is an isomorphism. The group $\tilde{H}_i(\Delta;\ZZ)$ is a subgroup of the free abelian group $\tilde{C}_i(\Delta,\ZZ)$, and hence is itself a free abelian group. Since $\tilde{H}_i(\Delta;\FF_p) = \FF_p$, we get $\tilde{H}_i(\Delta;\ZZ) = \ZZ$.  In summary, the integral reduced homology of $\Delta$ is $\ZZ$ in top degree and a finite abelian group in lower degrees. Now the universal coefficient theorem shows that the  homology groups with $\QQ$ coefficients are as required.
\end{proof}

\subsection{Pseudo-manifolds}
Homology spheres are a special case of pseudo-manifolds. A pseudo-manifold is a pure simplicial complex of dimension $n-1$ such that
\begin{enumerate}[(a) ]
\item Every  $(n-2)$-simplex  lies in exactly two $(n-1)$-simplices.
\item $\Delta$ is strongly connected: the geometric realization of $\Delta$ remains connected after we remove its $(n-3)$-skeleton. 
\end{enumerate}
If we allow every $(n-2)$-simplex to lie in either one or two $(n-1)$-simplices, then we obtain a pseudo-manifold with boundary. The $(n-2)$-simplices that lie in only one $(n-1)$-simplex generate a subcomplex $\partial \Delta$ called the boundary. In the following, by a pseudo-manifold we always mean a pseudo-manifold with empty boundary.

For a pseudo-manifold (with or without boundary) it makes sense to talk about orientability. An orientation on a simplex is an ordering of its vertices, up to changing the ordering by an even permutation. An orientation $v_{j_1}, v_{j_2}, \ldots, v_{j_n}$ on a simplex induces the orientation $v_{j_2}, \ldots, v_{j_n}$ on its facet. An orientation on a pseudo-manifold is an orientation on all its maximal simplices of dimension $n-1$ such that for every $(n-2)$-simplex that lies in two $(n-1)$-simplices, the orientations induced from the two $(n-1)$-simplices are opposite.

A pseudo-manifold (with or without boundary) of dimension $n-1$ is orientable if and only if the relative homology group $\tilde{H}_{n-1}(\Delta, \partial\Delta; R) = R$ for some ring $R$ in which $-1\neq 1$   (equivalently, for all such rings $R$). We will use this homological condition to define when $\Delta$ is orientable over the field $k$. Then over a field of characteristic $2$ every pseudo-manifold is orientable, with an orientation consisting of an arbitrary ordering of vertices of each maximal simplex.

Every homology sphere over $k$ is orientable over $k$, because the condition $\tilde{H}_{n-1}(\Delta; k) = \tilde{H}_{n-1}(\Link \emptyset; k) = k$  is part of the definition of homology sphere. 

Oriented pseudo-manifolds over a field $k$ are the most general simplicial complexes that we will consider below. We will state all results for such complexes and sometimes mention the special case of homology spheres.

\subsection{Stanley-Reisner rings}

Let $\Delta$ be a simplicial complex of dimension $n-1$  with vertices $v_1, \ldots, v_N$. The Stanley-Reisner ring of  $\Delta$ over a field $K$ is
\[ \cA(\Delta) = K[x_1,\ldots, x_N]/I_\Delta,\]
where $I_\Delta$ is the ideal generated by all square-free monomials $\prod_{i\in S} x_i$ such that the set $\{ v_i \}_{i\in S}$ is not a simplex in $\Delta$. The ring $\cA(\Delta)$ is a graded $K$-algebra. Given homogeneous degree $1$ elements $\theta_1,\ldots, \theta_n \in \cA^1(\Delta)$, we define the cohomology ring
\[ H(\Delta) = \cA(\Delta)/(\theta_1,\ldots,\theta_n).\]
To remove dependence on the choice of $\theta_i$, we work with generic parameters
\[ \theta_i = \a{i, 1} x_1 + \a{i, 2} x_2 + \cdots + \a{i, N} x_N, \quad i=1,\ldots,n,\]
where $\a{i, j}$ are indeterminates and the field $K$ is the field of rational functions $K=k(\a{i, j})$. We will only consider this generic case. 

If $\Delta$ is a homology sphere over $k$ and the parameters are generic as above, then the ring $H(\Delta)$ is a standard graded, Artinian, Gorenstein $K$-algebra of socle degree $n$. The \Po pairing defined by multiplication 
\[ H^m(\Delta) \times H^{n-m}(\Delta) \longrightarrow H^n(\Delta) \isom K\]
is a nondegenerate  bilinear pairing. If $\Delta$ is only an oriented pseudo-manifold over $k$, then we still have $H^n(\Delta) \isom K$, but the pairing may be degenerate.

\subsection{Piecewise polynomial functions}

It follows from a result of Billera \cite[Theorem 3.6]{Billera} that the Stanley-Reisner ring $\cA(\Delta)$ defined over the field $\RR$ is isomorphic to the ring of piecewise polynomial functions on a fan.  The fan here is the simplicial fan with each simplex in $\Delta$ replaced by a convex cone generated by the simplex. A piecewise polynomial function on the fan  $\Delta$ is a collection of polynomial functions $f_\sigma$ on maximal cones $\sigma$ that agree on the intersections of cones. 

The isomorphism between the Stanley-Reisner ring and the ring of piecewise polynomial functions is given as follows.  The rays ($1$-dimensional cones) of the fan are generated by the vertices $v_j$ of $\Delta$. The vertices $v_j$ define marked points on the rays they generate. Each variable $x_i$ defines a piecewise linear function on the fan that is uniquely determined by its values $x_i(v_j) = \delta_{i,j}$. This defines a morphism from the algebra $\RR[x_1,\ldots, x_N]$ to the $\RR$-algebra of piecewise polynomial functions. The kernel of this morphism is the Stanley-Reisner ideal $I_\Delta$. 

The parameters $\theta_1, \ldots, \theta_n$ (with coefficients $\a{i,j} \in\RR$) are piecewise linear functions on the fan. They define  a piecewise linear map $\Delta \to V =\RR^n$. We assume that this map is injective on every cone. If $\sigma$ is an $n$-dimensional cone, then a polynomial function on $\sigma$ is the same as a polynomial function on $V$. Hence for a pure $n$-dimensional fan and fixed parameters $\theta_1,\ldots,\theta_n$, an element $f\in \cA(\Delta)$ is a collection $\{ f_\sigma \}$ of polynomials on $V$,
\[ f_\sigma \in \RR[t_1,\ldots,t_n]\]
such that $f_{\sigma_1}$ and $f_{\sigma_2}$ agree on the image of $\sigma_1\cap \sigma_2$.

The above isomorphism between the Stanley-Reisner ring and the ring of piecewise polynomial functions carries over to the case where the rings are defined over an arbitrary field $K$. The fan is replaces by the affine scheme $\Spec \cA(\Delta)$. This scheme consists of linear $n$-dimensional spaces, one for each maximal simplex $\sigma$, glued along subspaces. The linear parameters $\theta_i$ define a finite morphism from this scheme to the $n$-space $V = \Spec K[t_1,\ldots, t_n]$, and for a pure complex $\Delta$ we may again view an element $f\in \cA(\Delta)$ as a collection of polynomials, one for each maximal simplex $\sigma$, 
\[ f=  \{ f_\sigma \}, \quad f_\sigma \in K[t_1,\ldots, t_n]. \]
The pullback of $t_i$ is $\theta_i$. This turns $\cA(\Delta)$ into a graded $K[t_1,\ldots, t_n]$-module, where $t_i$ acts by multiplication with $\theta_i$.

Let us  find the piecewise polynomial function $\{ f_\sigma \}$ defined by a polynomial $f\in  K[x_1,\ldots,x_N]$. 
Let $\sigma = \{v_{j_1},\ldots,v_{j_n}\}$ be a maximal simplex in $\Delta$. The piecewise linear map $\theta$ gives an isomorphism 
\begin{align} \label{eq-isom}
 K[t_1,\ldots,t_n] &\isom K[x_{j_1},\ldots, x_{j_n}] \\ \nonumber
 t_i &\mapsto  \a{i,  j_1} x_{j_1} + \cdots + \a{i,  j_n} x_{j_n}.
 \end{align} 
(This is the isomorphism between polynomial functions on $V$ and polynomial functions on the $n$-plane corresponding to $\sigma$.) Now given a polynomial $f(x_1,\ldots, x_N)$, we first map it to $K[x_{j_1},\ldots, x_{j_n}]$ by setting all other variables $x_j$ equal to zero. Then we apply the inverse of the isomorphism to get a polynomial $f_\sigma(t_1,\ldots, t_n)$. This construction defines an isomorphism from the Stanley-Reisner ring of $\Delta$ to the ring of piecewise polynomial functions.

\subsection{Brion's integration map}

Brion in \cite{Brion} defined the isomorphism 
\[  H^n(\Delta) \to K \]
in terms of piecewise polynomial functions on the fan $\Delta$. We describe this map in the more general case where the field $K$ is not necessarily  $\RR$, and $\Delta$ is a pseudo-manifold.

The isomorphism depends on a fixed volume form 
\[ t_1\wedge t_2 \wedge \cdots \wedge t_n \in \Lambda^n V^*,\]
and an orientation on $\Delta$ over the field $K$.  

Let $\sigma$ be a maximal simplex in $\Delta$, and let $v_{j_1},\ldots,v_{j_n}$ be an ordering of its vertices given by the orientation. (If the characteristic of $K$ is $2$, then any ordering is allowed.)
Using the isomorphism (\ref{eq-isom}), define the polynomial $\chi_\sigma \in K[t_1,\ldots, t_n]$ as
\[ \chi_\sigma = c_\sigma x_{j_1} x_{j_2} \cdots x_{j_n},\]
where the constant $c_\sigma \in K$ is such that 
\[ c_\sigma x_{j_1} \wedge x_{j_2} \wedge \cdots \wedge x_{j_n} = t_1\wedge t_2 \wedge \cdots \wedge t_n.\]
One can compute that 
 \begin{equation} \label{eq-dets}
  c_\sigma= \det\sigma = \det \begin{bmatrix} 
 \a{1, j_1} & \a{1, j_2} & \ldots & \a{1, j_n} \\
 \vdots & \vdots & \ddots & \vdots \\
  \a{n, j_1} & \a{n, j_2} & \ldots & \a{n, j_n}
  \end{bmatrix}.
  \end{equation}

The following lemma was proved in \cite[Theorem~2.2]{Brion} in the case where the field is $\RR$ and $\Delta$ is a complete fan. The proof for an arbitrary field $K$ and an oriented pseudo-manifold $\Delta$ is the same, so we recall it.  

\begin{lemma} 
Let $\Delta$ be an oriented pseudo-manifold of dimension $n-1$ over $k$. Consider
\begin{align} \label{eq-int}
 \pi_{\Delta}: \cA(\Delta) & \to K(t_1, \ldots, t_n) \\ \nonumber
 f & \mapsto  \sum_{\sigma \in \Delta_{n-1}} \frac{f_\sigma}{\chi_\sigma}.
 \end{align} 
Then the image of $\pi_{\Delta}$ lies in $K[t_1, \ldots, t_n]$ and the induced map
\[ \pi_{\Delta}: \cA(\Delta)  \to K[t_1, \ldots, t_n]\]
is a degree $-n$ homomorphism of graded $K[t_1,\ldots, t_n]$ modules. The map $\pi_{\Delta}$ in degree $n$ defines an isomorphism
\begin{equation*} 
 \pi_{\Delta}: H^n(\Delta) \to K.
 \end{equation*}
\end{lemma}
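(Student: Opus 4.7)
The plan is to verify three assertions separately: (i) that $\pi_\Delta$ is a $K[t_1,\ldots,t_n]$-linear map of degree $-n$, (ii) that its image lies in the polynomial ring, and (iii) that the induced map on $H^n(\Delta)$ is an isomorphism onto $K$. Assertion (i) is formal from the definition together with (\ref{eq-isom}): multiplication by $\theta_i$ on $\cA(\Delta)$ corresponds to multiplication by $t_i$ on each piecewise component $f_\sigma$, while $\chi_\sigma$ is a fixed polynomial of degree $n$ independent of $f$. Hence $\pi_\Delta(\theta_i f) = t_i \pi_\Delta(f)$, which both establishes the module property and guarantees that the degree-$n$ part factors through $H(\Delta) \to K[t_1,\ldots,t_n]/(t_1,\ldots,t_n) = K$.

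The main step is (ii), which I would prove by checking that the a priori rational function $\pi_\Delta(f)$ has no poles. Each summand $f_\sigma/\chi_\sigma$ has poles only along the $n$ hyperplanes cut out by the linear forms in the factorization $\chi_\sigma = c_\sigma \, x_{j_1}\cdots x_{j_n}$. Fix such a hyperplane $H$, corresponding on $\sigma$ to the vanishing of $x_{j_k}$, and hence to the opposite facet $\tau = \sigma \setminus \{v_{j_k}\}$. By the pseudo-manifold condition, $\tau$ lies in exactly one other maximal simplex $\sigma' = \tau \cup \{v'\}$, and for generic $\a{i,j}$ these are the only two terms in the sum with a pole along $H$. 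Taking residues, the contributions from $\sigma$ and $\sigma'$ have the form $f_\sigma|_\tau / (c_\sigma \prod_{i\neq k} x_{j_i}|_\tau)$ and the analogous expression for $\sigma'$; the products of remaining linear forms agree on $\tau$ because they are the barycentric coordinates of $\tau$ read off either simplex, and the restrictions $f_\sigma|_\tau = f_{\sigma'}|_\tau$ coincide by the compatibility built into $\cA(\Delta)$. So the two residues cancel precisely when $c_\sigma + c_{\sigma'} = 0$, and the orientation hypothesis --- that the orderings of $\sigma$ and $\sigma'$ induce opposite orientations on $\tau$ --- is exactly what forces this sign identity via (\ref{eq-dets}). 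Having no residue along any hyperplane, $\pi_\Delta(f)$ lies in $K[t_1,\ldots,t_n]$.

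For (iii), I would prove surjectivity in degree $n$ by an explicit calculation: fix a maximal simplex $\sigma_0 = \{v_{j_1},\ldots,v_{j_n}\}$ and take $f = x_{j_1}\cdots x_{j_n}$. For any other maximal simplex $\sigma$, some vertex $v_{j_k}$ is absent, so the corresponding variable is set to zero in the piecewise description and $f_\sigma = 0$; while $f_{\sigma_0} = \chi_{\sigma_0}/c_{\sigma_0}$, giving $\pi_\Delta(f) = c_{\sigma_0}^{-1} \in K^\times$. Combined with the bound $\dim_K H^n(\Delta) \leq 1$ stated in the previous subsection (which for an oriented pseudo-manifold follows from a fundamental-class argument), surjectivity upgrades to an isomorphism. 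The step I expect to be the main obstacle is (ii): the residue cancellation is conceptually transparent, but verifying the orientation-induced identity $c_\sigma + c_{\sigma'} = 0$ requires tracking the parity of the permutation relating the two oriented vertex orderings of $\sigma$ and $\sigma'$ along their common facet $\tau$ and expanding the determinants accordingly.
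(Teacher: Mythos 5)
Your three-part strategy --- (i) module-linearity and the factoring through $H^n(\Delta)$, (ii) residue cancellation across each wall, (iii) an explicit element hitting a unit in $K$ plus $\dim H^n\le 1$ --- is exactly the structure of the paper's proof, and your choice in (iii) of $f=x_{j_1}\cdots x_{j_n}$ (a monomial supported on a single maximal simplex) is a clean variant of the paper's piecewise-defined test element.

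However, there is a genuine gap in step (ii): the sign identity you assert, $c_\sigma + c_{\sigma'} = 0$, is not correct, and the cancellation does not reduce to it. The two expressions you write down are residues computed with respect to \emph{different} local parameters: for $\sigma$ you implicitly use the barycentric coordinate $x_{j_k}$ of the opposite vertex, and for $\sigma'$ you use the barycentric coordinate $x'_{j_{k'}}$ of \emph{its} opposite vertex. As linear forms on $V$ pulled back through $\theta$, both vanish on $H$, but they are not equal; they differ by a nontrivial scalar $\lambda$ with $x'_{j_{k'}}=\lambda\, x_{j_k}$. Residues taken with respect to two different uniformizers are not directly additive, so the sum is not $\frac{1}{c_\sigma}+\frac{1}{c_{\sigma'}}$ up to a common factor. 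If you fix a single parameter $t$ for the wall $H$ and track the rescaling, the condition for cancellation becomes
\[
c_\sigma\, x_{j_k} \;=\; -\,c_{\sigma'}\, x'_{j_{k'}}
\]
as linear forms on $V$, not $c_\sigma=-c_{\sigma'}$. Already for $n=2$ with $\sigma=\{v_1,v_2\}$, $\sigma'=\{v_2,v_3\}$, $\tau=\{v_2\}$, the two $2\times 2$ determinants $\det(v_1,v_2)$ and $\det(v_2,v_3)$ are evidently not negatives of each other, which rules out your identity. The paper establishes the correct relation by comparing the two normalizations $c_\sigma\, x_{j_k}\wedge x_{j_2}\wedge\cdots\wedge x_{j_n}$ and $c_{\sigma'}\, x'_{j_{k'}}\wedge x'_{j_2}\wedge\cdots\wedge x'_{j_n}$ against the fixed form $t_1\wedge\cdots\wedge t_n$, using the opposite-orientation hypothesis on $\tau$ to supply the minus sign, and then restricting to $H$; this is where the permutation bookkeeping you anticipate actually lives.

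One smaller slip in (i): the conclusion that $\pi_\Delta$ annihilates $(\theta_1,\dots,\theta_n)\cA^{n-1}(\Delta)$ in degree $n$ is cleanest via the observation that $\pi_\Delta(g)$ for $g\in\cA^{n-1}(\Delta)$ has degree $-1$, hence vanishes once you know the image is polynomial; your phrasing through the quotient $K[t_1,\dots,t_n]/(t_1,\dots,t_n)$ is fine in substance but obscures that this step depends on part (ii) being established first (so the negative-degree image is in $K[t]$ and therefore zero, rather than merely in $(t_1,\dots,t_n)K(t_1,\dots,t_n)$).
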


\begin{proof}
We start by proving the last statement of the lemma. Recall that $t_i$ act on $\cA(\Delta)$ by multiplication with $\theta_i$. It follows that $\pi_{\Delta}$ maps $(\theta_1,\ldots,\theta_n) \cA^{n-1}(\Delta)$ to zero because it maps $\cA^{n-1}(\Delta)$ to elements of degree $-1$ in $K[t_1,\ldots, t_n]$. Hence $\pi_{\Delta}$ in degree $n$ factors through $H^n(\Delta)$. This map is nonzero because the piecewise polynomial function $\{ f_\sigma\}$ such that $f_{\sigma_0} = \chi_{\sigma_0}$ and $f_\sigma=0$ for $\sigma\neq \sigma_0$ for some fixed $\sigma_0$ maps to $1\in K$.

The map $\pi_{\Delta}$ is clearly a homomorphism of $K[t_1,\ldots, t_n]$-modules: when we multiply $f= \{f_\sigma \}$ with $\theta_i$, we multiply each $f_\sigma(t_1,\ldots,t_n)$ with $t_i$. The map decreases degree by $n$ because all $\chi_\sigma$ are homogeneous polynomials of degree $n$. It remains to show that the image of $\pi_{\Delta}$ lies in the polynomial ring.

Each $\chi_\sigma$ is a product of linear functions that vanish on the $n$ hyperplanes in $V$ spanned by the facets of $\sigma$. This implies that the rational function $f_\sigma/\chi_\sigma$ can have at worst simple poles along these hyperplanes. Fix one $(n-2)$-dimensional simplex $\tau$ and let $H$ be the hyperplane it spans. Let $\sigma_1$ and $\sigma_2$ be the two $(n-1)$-dimensional simplices containing $\tau$. Now it suffices to prove that the residues of  $f_{\sigma_1}/\chi_{\sigma_1}$ and $f_{\sigma_2}/\chi_{\sigma_2}$ along the hyperplane $H$ sum to zero. This implies that all poles cancel and the image of $\pi_{\Delta}$ is a polynomial.

Consider $\chi_{\sigma_1} =  (\det \sigma_1) x_{j_1} x_{j_2} \cdots x_{j_n}$, where we take $t=(\det \sigma_1) x_{j_1}$ as the parameter that vanishes on $H$. The residue of $f_{\sigma_1}/\chi_{\sigma_1}$ with respect to the parameter $t$ is then 
\[ \frac{f_{\sigma_1}}{x_{j_2} \cdots x_{j_n}} \pmod t.\]
Working mod $t$ means that we restrict the rational function to the hyperplane $H$.

Now consider $\chi_{\sigma_2} = (\det \sigma_2) x_{j_1'} x'_{j_2} \cdots x'_{j_n}$, where $(\det \sigma_2) x_{j_1'}$ vanishes on $H$ and $x'_{j_2}, \ldots, x'_{j_n}$ are equal to $x_{j_2}, \ldots, x_{j_n}$ when restricted to the hyperplane $H$. From the normalization condition
\begin{align*}
  (\det \sigma_1) x_{j_1}\wedge x_{j_2} \wedge \cdots \wedge x_{j_n} &= -(\det \sigma_2) x_{j_1'}\wedge x'_{j_2} \wedge \cdots \wedge x'_{j_n} \\
    &= -(\det \sigma_2) x_{j_1'}\wedge x_{j_2} \wedge \cdots \wedge x_{j_n},
\end{align*}
it follows that 
\[ t= (\det \sigma_1) x_{j_1} = - (\det \sigma_2) x_{j_1'}.\]
The residue of $f_{\sigma_2}/\chi_{\sigma_2}$ with respect to the parameter $t$ is
\[ -\frac{f_{\sigma_2}}{x'_{j_2} \cdots x'_{j_n}} \equiv - \frac{f_{\sigma_2}}{x_{j_2} \cdots x_{j_n}} \pmod t.\]
This is equal to the negative of the residue of $f_{\sigma_1}/\chi_{\sigma_1}$ because $f_{\sigma_1}$ and $f_{\sigma_2}$ restrict to the same polynomial on $H$.
\end{proof}

\begin{rema} \label{rem-eval}
The map $\pi_{\Delta}$ can also be viewed as an evaluation map on piecewise polynomial functions. Choose a point $v_0\in V$ general enough such that $\chi_\sigma(v_0)\neq 0$ for any $\sigma$. We may now represent an element $f\in \cA^n(\Delta)$ as a vector of values $(f_\sigma(v_0))_\sigma \in K^M$. The map $\pi_{\Delta}$ is then defined as a weighted sum of these values:
\[  ( f_\sigma(v_0) )_\sigma \longmapsto \sum_\sigma \frac{f_\sigma(v_0)}{\chi_\sigma(v_0)}.\]
Expressing cohomology classes as vectors of values is a special case of a theorem by Carrell and Lieberman \cite{CarrellLieberman}.
\end{rema}

\subsection{Connected sums} \label{sec-connected-sum}

Consider the decomposition of an $(n-1)$-dimensional pseudo-manifold $\Delta$ as the connected sum of two pseudo-manifolds
\[ \Delta = \Delta_1 \#_D \Delta_2.\]
Here $D$ is a common $(n-1)$-dimensional subcomplex of $\Delta_1$ and  $\Delta_2$ that is a pseudo-manifold with boundary. We remove the interior of $D$ from $\Delta_1, \Delta_2$, and glue the remaining complexes along their common boundary. We assume that $\Delta$, $\Delta_1$ and $\Delta_2$ are oriented compatibly. This means that if a maximal simplex lies in both $\Delta$ and $\Delta_i$, then it has the same orientation in both.

  \begin{figure}[htb]
    \centering
    \includegraphics[width=0.8\textwidth]{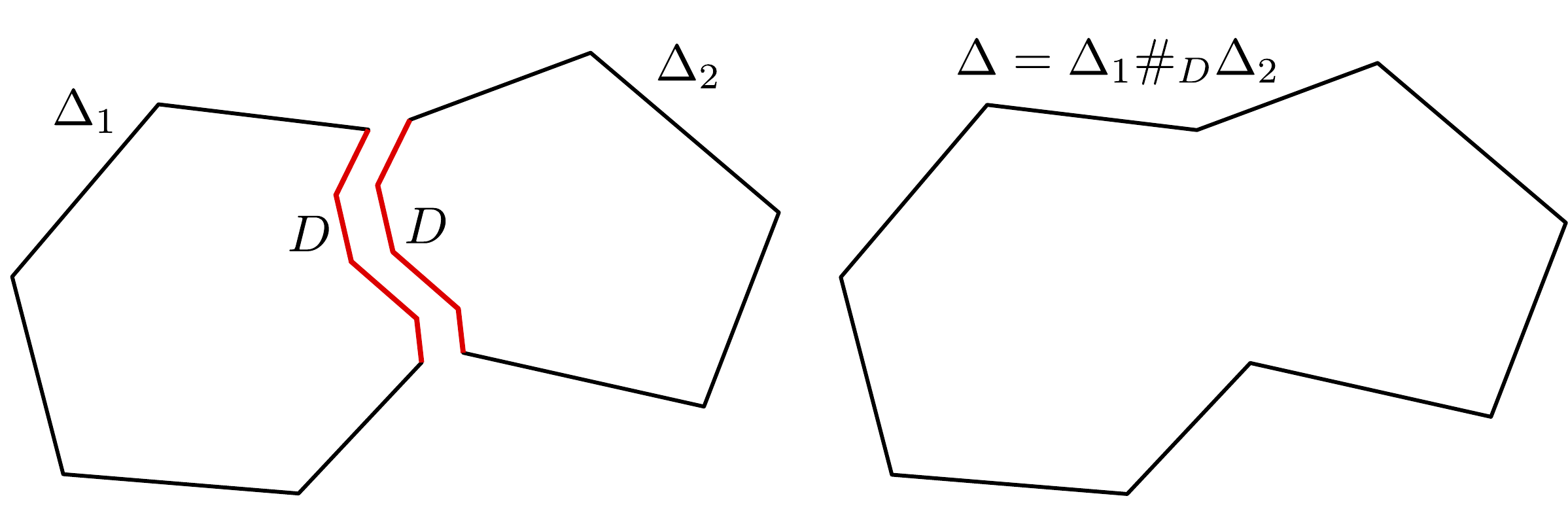}
    \caption{Connected sum of $1$-dimensional spheres $\Delta_1$ and $\Delta_2$ along $D$.}
  \end{figure}
  
  Let us also denote the simplicial complexes $\Delta_1$ and  $\Delta_2$ glued along $D$ by 
\[ \tilde{\Delta} = \Delta_1 \cup_D \Delta_2.\]
Let $\tilde{\theta}_i \in \cA^1(\tilde\Delta)$ be linear parameters for $\tilde{\Delta}$. These parameters, viewed as piecewise linear functions on $\tilde{\Delta}$, restrict to linear parameters on $\Delta$, $\Delta_1$ and $\Delta_2$. Similarly, a piecewise polynomial function $\tilde{f} \in \cA^n(\tilde{\Delta})$ restricts to piecewise polynomial functions $\tilde{f}|_\Delta \in \cA^n(\Delta)$,  $\tilde{f}|_{\Delta_1} \in \cA^n(\Delta_1)$ and $\tilde{f}|_{\Delta_2} \in \cA^n(\Delta_2)$. The latter two agree on $D$.

\begin{lemma} 
Let $\tilde{f} \in \cA^n(\tilde{\Delta})$. Then
\[ \pi_{\Delta} (\tilde{f}|_\Delta) = \pi_{\Delta_1}(\tilde{f}|_{\Delta_1}) + \pi_{\Delta_2}(\tilde{f}|_{\Delta_2}).\]
\end{lemma}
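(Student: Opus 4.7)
The plan is to compute both sides of the claimed identity using the explicit sum formula (\ref{eq-int}) for the integration map and to match terms. The maximal simplices of $\Delta_1$ decompose as those lying in $D$ and those lying in $\Delta\cap\Delta_1$, and similarly for $\Delta_2$, while the maximal simplices of $\Delta$ are exactly the union of the two "non-$D$" parts. After restricting $\tilde f$ to the various subcomplexes, the $\sigma$-component of $\tilde f|_\Delta$, $\tilde f|_{\Delta_1}$ and $\tilde f|_{\Delta_2}$ agree on any $\sigma$ where they are simultaneously defined, because the linear parameters on all four complexes are the restrictions of $\tilde\theta_1,\ldots,\tilde\theta_n$, so the isomorphism (\ref{eq-isom}) used to produce the $\sigma$-component is the same in all cases. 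Thus, using (\ref{eq-int}) on the right-hand side, one gets
\[
\pi_{\Delta_1}(\tilde f|_{\Delta_1}) + \pi_{\Delta_2}(\tilde f|_{\Delta_2}) = \sum_{\sigma \in \Delta_{n-1}} \frac{\tilde f_\sigma}{\chi_\sigma} + \sum_{\sigma \in D_{n-1}} \left(\frac{\tilde f_\sigma}{\chi_\sigma^{\Delta_1}} + \frac{\tilde f_\sigma}{\chi_\sigma^{\Delta_2}}\right),
\]
where $\chi_\sigma^{\Delta_i}$ is computed with the orientation of $\sigma$ inherited from $\Delta_i$. The first sum on the right is exactly $\pi_\Delta(\tilde f|_\Delta)$, so the task reduces to showing that each summand in the second sum vanishes.

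The key step is the orientation bookkeeping on $D$. Since $\Delta_1$ is oriented and is the union of $\Delta_1\setminus\mathrm{int}(D)$ and $D$ glued along $\partial D$, the two orientations must induce opposite orientations on every $(n-2)$-simplex of $\partial D$. On the other hand, the compatibility assumption says that the orientation on $\Delta_1\setminus\mathrm{int}(D)\subset \Delta$ equals the one inherited from $\Delta$, and similarly for $\Delta_2$; so the orientations on $\Delta_2\setminus\mathrm{int}(D)$ from $\Delta$ and from $\Delta_2$ agree, which forces the orientation induced on $\partial D$ from $\Delta_2\setminus\mathrm{int}(D)$ to be opposite to the one induced from $\Delta_1\setminus\mathrm{int}(D)$. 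Running these inductions backwards, the orientation of any maximal simplex of $D$ coming from $\Delta_1$ is opposite to the one coming from $\Delta_2$. By formula (\ref{eq-dets}) the determinant $\det\sigma$ changes sign under a transposition of columns, so $\chi_\sigma^{\Delta_1} = -\chi_\sigma^{\Delta_2}$, giving the desired cancellation in characteristic different from $2$. In characteristic $2$ the two terms are simply equal and sum to zero, which is consistent with the fact that every pseudo-manifold over a characteristic $2$ field is orientable with an arbitrary ordering.

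The main obstacle is precisely this orientation argument: one must carefully unravel what "compatibly oriented" means for the triple $(\Delta,\Delta_1,\Delta_2)$ with an internal wall $D$, and confirm that it forces the two $D$-orientations to be opposite. Once that sign is in hand, everything else is just matching the terms of (\ref{eq-int}) over the three indexing sets of maximal simplices.
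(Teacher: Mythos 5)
Your proof is correct and follows essentially the same route as the paper: expand $\pi_{\Delta_1}$ and $\pi_{\Delta_2}$ via formula (\ref{eq-int}), observe that the terms over $D_{n-1}$ cancel because the maximal simplices of $D$ carry opposite orientations in $\Delta_1$ and $\Delta_2$, and note that the surviving terms are exactly those of $\pi_\Delta$. The only difference is that you spell out the orientation bookkeeping (propagating the sign flip inward from $\partial D$ using strong connectivity of $D$), which the paper asserts without proof.
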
 

\begin{proof} 
The maximal simplices of $D$ appear in $\Delta_1$ and $\Delta_2$ with opposite orientations. Hence these terms cancel on the right hand side. The remaining terms give the left hand side.
\end{proof}

\begin{rema} 
The  previous lemma was used in \cite{Karu, BL2, BBFK2}. Its meaning as integration over a connected sum was realized by Karl-Heinz Fieseler. The lemma says that Brion's integration map behaves like ordinary integration. One can decompose the domain of integration into pieces and sum the integrals over the pieces.
\end{rema}

We next consider a more general connected sum. Let $v_0$ be a new vertex and let 
\[ C(\Delta) = \{v_0\} * \Delta \]
be the cone over $\Delta$ with vertex $v_0$.  Let $\pi_i = \{v_0\} * \sigma_i$, $i=1,\ldots,M$ be the maximal simplices in $C(\Delta)$, and let $\Pi_i = \partial \pi_i$ be the simplicial $(n-1)$-spheres. Then
\[ \Delta = \#_{i=1}^M \Pi_i.\]
Here we use a more general notion of connected sum. We assume that $\Delta$ and $\Pi_i$ are oriented compatibly. Then an $(n-1)$-simplex $\sigma \in \Delta$ appears in the disjoint union $\sqcup_{i=1}^M \Pi_i$ exactly once and with the same orientation as in $\Delta$. All other $(n-1)$-simplices of  $\sqcup_{i=1}^M \Pi_i$ appear there twice with opposite orientations.

  \begin{figure}[htb]
    \centering
    \includegraphics[width=0.8\textwidth]{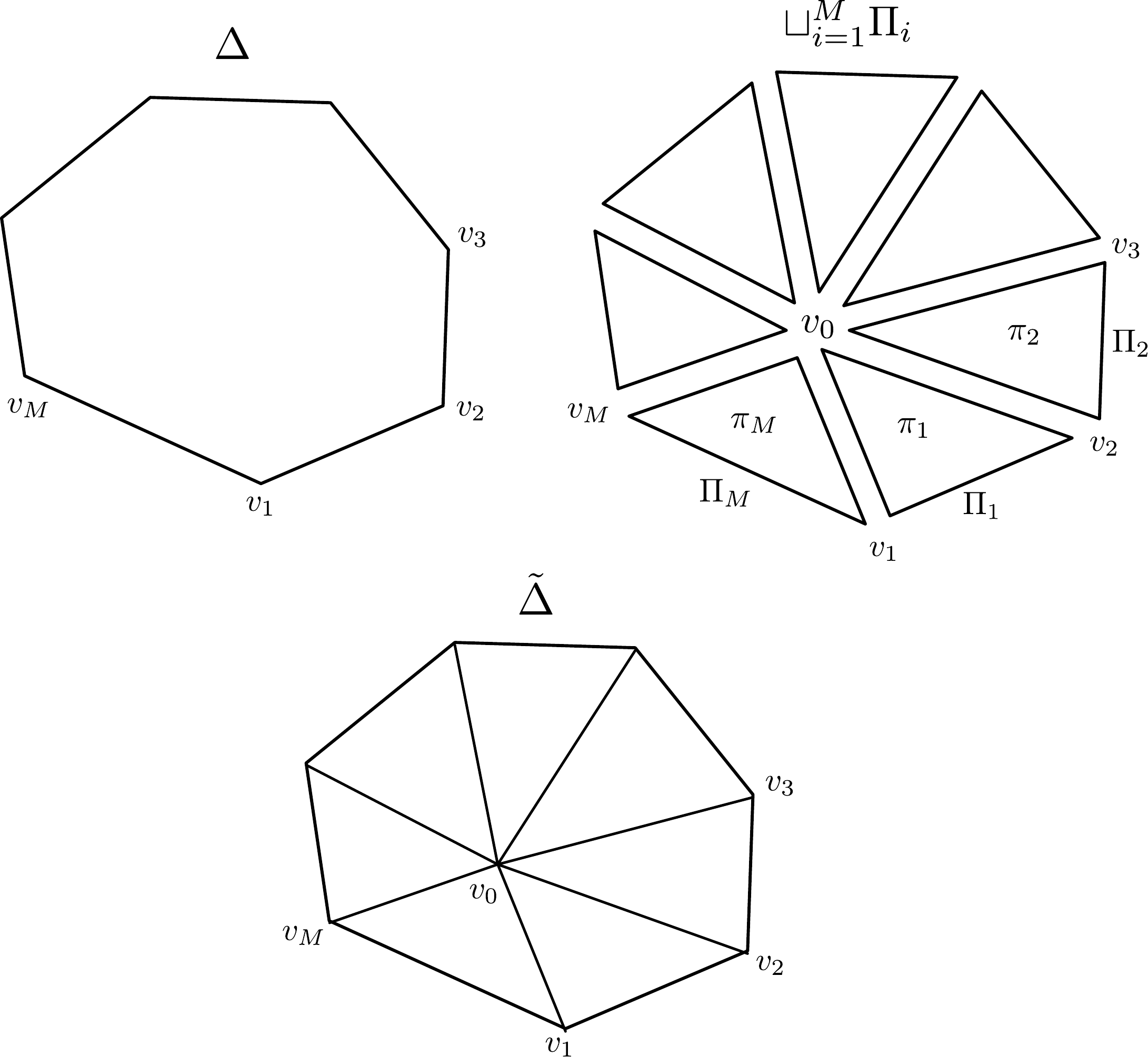}
    \caption{Decomposition of a $1$-sphere as a connected sum.}
  \end{figure}

As before, we let $\tilde{\Delta}$ be the union of $\Pi_i$. A system of linear parameters $\tilde{\theta}_i$ on $\tilde{\Delta}$ restricts to a system of parameters on $\Delta$ and all $\Pi_i$. 

\begin{lemma} \label{lem-int-sum}
Let $\tilde{f}\in \cA^n(\tilde{\Delta})$. Then 
\[ \pi_{\Delta}(\tilde{f}|_\Delta)  = \sum_{i=1}^M \pi_{\Pi_i}(\tilde{f}|_{\Pi_i}).  \]
\end{lemma}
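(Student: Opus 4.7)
The plan is to mimic the preceding binary-connected-sum lemma, handling all $M$ pieces at once. I would expand the right-hand side using Brion's explicit formula:
\[ \sum_{i=1}^M \pi_{\Pi_i}(\tilde{f}|_{\Pi_i}) = \sum_{i=1}^M \sum_{\rho \in (\Pi_i)_{n-1}} \frac{\tilde{f}_\rho}{\chi_\rho}. \]
The maximal simplices of $\Pi_i = \partial(\{v_0\} * \sigma_i)$ come in two kinds: the simplex $\sigma_i$ itself (its unique facet not containing $v_0$), and the $n$ simplices $\{v_0\} * \tau$ where $\tau$ ranges over the facets of $\sigma_i$ (equivalently, over the $(n-2)$-faces of $\Delta$ contained in $\sigma_i$). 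This partitions the double sum into a $\sigma$-part and a $v_0$-part.

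For the $\sigma$-part, the compatible-orientation assumption ensures that each $\sigma_i$ appears with the same orientation in $\Pi_i$ as in $\Delta$. Summing these contributions over $i$ reproduces $\sum_{\sigma \in \Delta_{n-1}} \tilde{f}_\sigma/\chi_\sigma = \pi_\Delta(\tilde{f}|_\Delta)$, which is the left-hand side. It then remains to show the $v_0$-part vanishes. By the pseudo-manifold property of $\Delta$, each $\tau \in \Delta_{n-2}$ lies in exactly two maximal simplices $\sigma_p, \sigma_q$ of $\Delta$, so $\{v_0\} * \tau$ arises as a maximal simplex in precisely two of the $\Pi_i$, namely $\Pi_p$ and $\Pi_q$. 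The strategy is to pair these two contributions and show that they cancel.

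This cancellation is exactly the one from the earlier lemma, applied to the common facet $\{v_0\} * \tau$ of the two maximal simplices $\{v_0\} * \sigma_p$ and $\{v_0\} * \sigma_q$ of the cone $C(\Delta)$. The compatibility of orientations on $\Delta$ and on each $\Pi_i$ forces the orientations induced on $\{v_0\} * \tau$ from $\Pi_p$ and from $\Pi_q$ to be opposite, so by the normalization relation $(\det \sigma_1) x_{j_1} = -(\det \sigma_2) x_{j_1'}$ established in the proof of the Brion integration lemma, the residues — and hence the terms $\tilde{f}_{\{v_0\}*\tau}/\chi_{\{v_0\}*\tau}$ — contributed by $\Pi_p$ and $\Pi_q$ are negatives of one another. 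Since this pairing exhausts the $v_0$-part, all such contributions cancel, giving the desired equality.

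I expect the only delicate step to be the orientation bookkeeping: one has to verify that the orientation conventions on $\Delta$ and on the $\Pi_i$ really do induce opposite orientations on every shared face $\{v_0\} * \tau$. This should follow cleanly from the compatible-orientation hypothesis on $\tilde\Delta = \cup_i \Pi_i$ together with the fact that $\tilde\Delta$ is itself an oriented pseudo-manifold in a neighbourhood of each such face (the two sheets $\Pi_p$ and $\Pi_q$ meeting along $\{v_0\} * \tau$). Once this is settled, the proof reduces to the residue-cancellation mechanism already proved, now invoked simultaneously across all shared faces of $\tilde\Delta$.
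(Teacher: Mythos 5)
Your overall strategy --- split the right-hand side into the $\sigma_i$-terms, which reproduce $\pi_\Delta(\tilde f|_\Delta)$, and the $\{v_0\}*\tau$-terms, which cancel in pairs because each such simplex appears in exactly two of the $\Pi_i$ with opposite orientations --- is exactly what the paper intends. The paper gives no written proof of this lemma, but the short proof it writes out for the binary connected sum ($\Delta = \Delta_1\#_D\Delta_2$) is precisely your argument restricted to two pieces, and the paragraph immediately preceding the lemma spells out the orientation bookkeeping you need: each $\sigma\in\Delta_{n-1}$ appears once in $\sqcup_i\Pi_i$ with the same orientation, all other maximal simplices appear twice with opposite orientations.

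The one misstep is the mechanism you invoke for the cancellation. You appeal to the residue argument from the proof of the Brion integration lemma, and in particular to the normalization relation $(\det\sigma_1)x_{j_1} = -(\det\sigma_2)x_{j_1'}$. That argument concerns two \emph{distinct} adjacent simplices $\sigma_1\neq\sigma_2$ of a single pseudo-manifold sharing a hyperplane $H$, and compares the residues along $H$ of two genuinely different rational functions $f_{\sigma_1}/\chi_{\sigma_1}$ and $f_{\sigma_2}/\chi_{\sigma_2}$, whose numerators agree only after restriction to $H$. Here the situation is simpler: $\rho=\{v_0\}*\tau$ is literally the \emph{same} simplex in $\Pi_p$ and $\Pi_q$, so $\tilde f_\rho$ is the same polynomial in both terms, and the orientation enters only through the sign of $\det\rho$ in $\chi_\rho$ (equation~(\ref{eq-dets})). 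Reversing the vertex ordering flips the sign of that determinant, so $\chi_\rho^{(\Pi_p)} = -\chi_\rho^{(\Pi_q)}$, and the two contributions $\tilde f_\rho/\chi_\rho^{(\Pi_p)}$ and $\tilde f_\rho/\chi_\rho^{(\Pi_q)}$ cancel identically as rational functions, with no residue and no restriction to a hyperplane needed. Once you replace the residue appeal with this direct sign argument, the proof is complete and matches the paper's binary case.
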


The parameters $\tilde{\theta}_i$  have extra variables $a_{i,0}$ corresponding to the new vertex $v_0$. We may include these in the field $K$,
\[ K = k(\a{i, j})_{i=1,\ldots,n; j=0,\ldots, N}.\]  
However, the  map $\pi_{\Delta}$ does not depend on the variables $\a{i, 0}$. If $f\in \cA^n(\Delta)$ has coefficients in $k(\a{i, j})_{i=1,\ldots,n; j=1,\ldots, N}$ then $\pi_{\Delta}(f)$ also lies in the same field.

An alternative connected sum decomposition would be to take one of the existing vertices, say $v_1$, as the cone point and replace $C(\Delta)$ with 
\[ \{v_1\}* (\Delta \setminus \Star^\circ v_1).\]

  \begin{figure}[htb]
    \centering
    \includegraphics[width=0.8\textwidth]{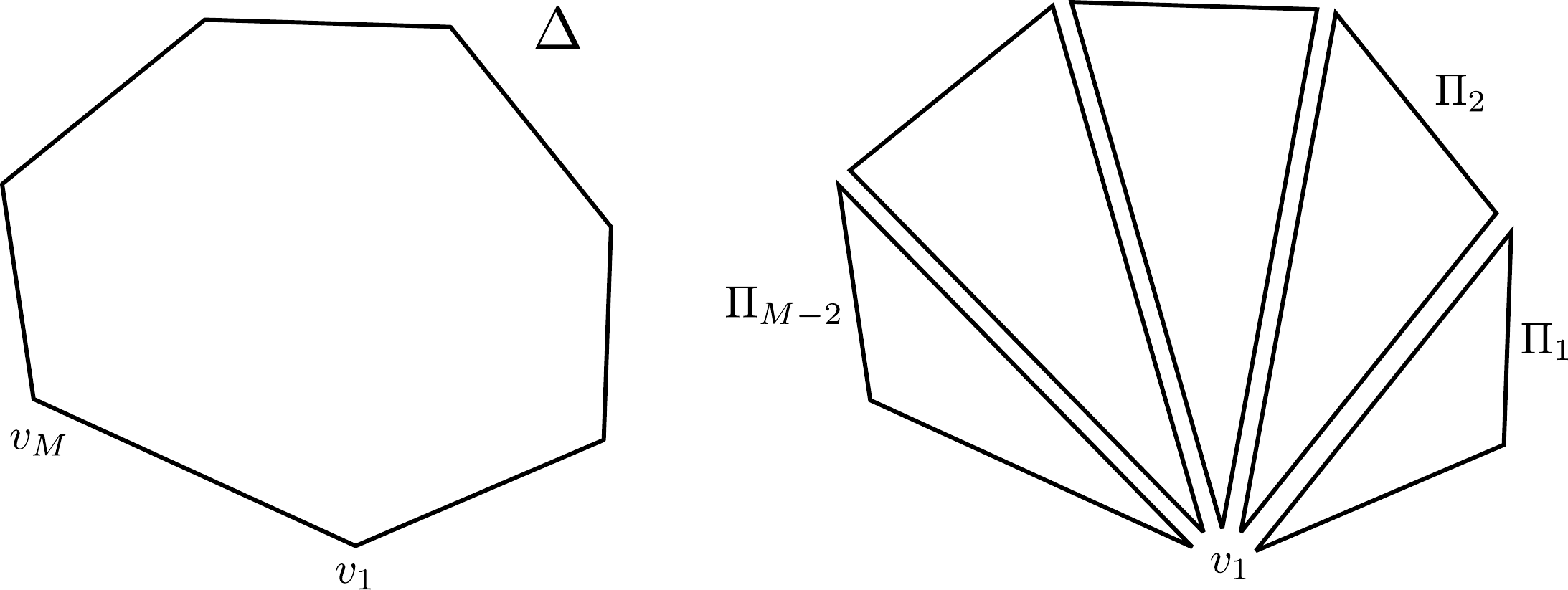}
    \caption{Alternative decomposition of a $1$-sphere as a connected sum.}
  \end{figure}

\section{Mixed volumes}
\label{sec-mixed}

Let $H$ be a standard graded, Artinian, Gorenstein $K$-algebra of socle degree $n$,
\[ H = K[x_1,\ldots,x_N]/I. \]
It is well-known that $H$ is determined by the linear function
\[  W: K[x_1,\ldots,x_N]_n \to H^n \stackrel{\isom}{\longrightarrow} K.\]
(We have denoted by subscript $n$ the degree $n$ homogeneous part of $K[x_1,\ldots,x_N]$.) Indeed,  one recovers the ideal $I$ from $W$ using the property that $f\in K[x_1,\ldots,x_N]_m$ lies in $I$ if and only if $W(fg) = 0$ for any $g$ of degree $n-m$. More generally, any nonzero linear function $W: K[x_1,\ldots,x_N]_n \to K$ determines a standard graded, Artinian, Gorenstein $K$-algebra $H$ of socle degree $n$.

For an oriented pseudo-manifold $\Delta$, let the function $W = W_\Delta$ be the composition
\[ W_\Delta:  K[x_1,\ldots,x_N]_n \to \cA^n(\Delta) \stackrel{\pi_\Delta}{\longrightarrow} K.\]
When $\Delta$ is a homology sphere over $K$, then $W_\Delta$ determines the algebra $H(\Delta)$. When $\Delta$ is an oriented pseudo-manifold over $K$, then $W_\Delta$ determines an algebra that we denote $\overline{H}(\Delta)$. This algebra in general is a quotient of the algebra $H(\Delta)$.

In the theory of polytopes and toric varieties the function $W_\Delta$ is known as the mixed volume.

\subsection{The case of $\Pi$} \label{sec-Pi}
Let $\pi$ be an $n$-simplex and $\Pi = \partial\pi$ the $(n-1)$-dimensional sphere. We compute here the mixed volume $W_\Pi$. 

Let $v_0, v_1,\ldots, v_n$ be the vertices of $\Pi$, and $\sigma_j = \{v_0,\ldots, \hat{v}_j, \ldots, v_n\}$ the maximal simplices. We choose the orientation on $\Pi$ so that $v_1,\ldots,v_n$ is positively oriented on the simplex $\sigma_0$.   Denote by 
\[ A = (\a{i, j})_{i,j} \]
the $n\times (n+1)$ matrix of variables, where the columns are indexed by $0,1,\ldots, n$ and the rows by $1,\ldots,n$. Let $X_j \in K$ be $(-1)^j$ times the determinant of the matrix $A$ with its $j$-th column removed. Then $X_j = \det \sigma_j$ as defined in Equation~(\ref{eq-dets}) on page~\pageref{eq-dets}.

\begin{lemma} \label{lem-Pi}
Let $f\in K[x_0,\ldots, x_n]_n$. Then
\[ W_\Pi \big( f(x_0,x_1,\ldots,x_n) \big) =   \frac{f(X_0, X_1, X_2, X_3,\ldots, X_n)}{X_0 X_1\cdots X_n}.\]
\end{lemma}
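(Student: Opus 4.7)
The plan is to identify both sides of the asserted formula as $K$-linear functionals on the one-dimensional space $H^n(\Pi)$, show they factor through it, and then pin down the scalar by a single evaluation. Define
\[ \tilde{W}\colon K[x_0,\ldots,x_n]_n \to K, \qquad \tilde{W}(f) = \frac{f(X_0,X_1,\ldots,X_n)}{X_0 X_1\cdots X_n}. \]
This is well-defined because each $X_j$ is a nonzero polynomial in the variables $\a{i,j}$, hence invertible in $K=k(\a{i,j})$. The lemma is then equivalent to the identity $\tilde{W}=W_\Pi$ as linear functionals on $K[x_0,\ldots,x_n]_n$.

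The first step is to establish the Cramer-type identity $\sum_{j=0}^n \a{i,j} X_j = 0$ for every $i=1,\ldots,n$. This follows by considering the $(n+1)\times(n+1)$ matrix obtained by prepending row $i$ of $A$ to $A$ itself: the determinant vanishes (two equal rows), and Laplace expansion along the first row gives exactly the claimed sum. In particular, $\theta_i(X_0,\ldots,X_n)=0$, which implies $\tilde{W}(\theta_i g) = \theta_i(X) g(X)/\prod_k X_k = 0$ for every $g\in K[x_0,\ldots,x_n]_{n-1}$. Since the Stanley-Reisner ideal $I_\Pi=(x_0 x_1\cdots x_n)$ is generated in degree $n+1$ and so contributes nothing in degree $n$, we conclude that $\tilde{W}$ descends to a $K$-linear functional on $H^n(\Pi)$.

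By the preceding lemma, $W_\Pi=\pi_\Pi$ also factors through $H^n(\Pi)$, and in fact $\pi_\Pi\colon H^n(\Pi)\isom K$ is an isomorphism since $\Pi$ is an oriented pseudo-manifold. In particular $H^n(\Pi)$ is one-dimensional, so $\tilde{W}$ and $W_\Pi$ must differ by a scalar in $K$. To pin down this scalar I would evaluate both on the monomial $f = x_1 x_2\cdots x_n$. In the Brion sum only $\sigma_0$ contributes because every other simplex $\sigma_j$ ($j\geq 1$) forces $x_j=0$ in $f_{\sigma_j}$, and direct computation gives
\[ W_\Pi(f) = \frac{f_{\sigma_0}}{\chi_{\sigma_0}} = \frac{x_1 x_2\cdots x_n}{X_0\, x_1 x_2\cdots x_n} = \frac{1}{X_0}. \]
The other side gives $\tilde{W}(f) = (X_1\cdots X_n)/(X_0 X_1\cdots X_n) = 1/X_0$, so the scalar is $1$ and $W_\Pi=\tilde{W}$.

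The only delicate point is verifying the descent to $H^n(\Pi)$, which rests entirely on the dependence $\sum_j \a{i,j}X_j=0$; once that is secured, both the factorization and the scalar-matching are essentially formal. I expect this to be the cleanest route because it avoids chasing the substitution $t_i\mapsto \sum_k \a{i,k}x_k$ through each chart $\sigma_j$ individually, replacing that tedious coordinate computation with a single invariant identity.
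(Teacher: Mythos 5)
Your proof is correct and follows essentially the same approach as the paper: establish the Cramer-type identity $\sum_j \a{i,j}X_j = 0$ so that the evaluation formula kills $(\theta_1,\ldots,\theta_n)$ and hence descends to the one-dimensional $H^n(\Pi)$, then match the scalar by a single evaluation. The only cosmetic difference is that you test on $x_1\cdots x_n$ (getting $1/X_0$ on both sides) while the paper tests on $(\det\sigma_0)x_1\cdots x_n$ (getting $1$ on both sides); both pin down the constant equally well.
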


\begin{proof}
We first check that $W_\Pi(\theta_i g) = 0$ for any $g$ of degree $n-1$ and $i=1,\ldots,n$. It suffices to show that $\theta_i$ evaluated at $X_0,\ldots, X_n$ is zero. From the definition,
\[ \theta_i(X_0,\ldots, X_n) = \sum_{j=0}^n  \a{i, j} X_j.\]
This sum is the expansion of the determinant of the matrix $A$ with a copy of its $i$-th row added as the first row. Since the matrix has two repeated rows, its determinant is zero. 

The previous argument shows that the map $W_\Pi$ factors through $H^n(\Delta)$.  Let us check that its value on the monomial $\chi_{\sigma_0} = (\det\sigma_0) x_1\cdots x_n$ is $1$ as required:
\[ W_\Pi((\det\sigma_0)x_1\cdots x_n) = \frac{X_0 X_1 \cdots X_n}{X_0 X_1\cdots X_n} = 1. \qedhere\]
\end{proof}

To simplify notation, let us write the mixed volume as
\[ W_\Pi = \frac{1}{c_\Pi} \ev_\Pi,\]
where $c_\Pi = X_0 X_1 \cdots X_n \in K$ and $\ev_\Pi: K[x_0,\ldots, x_n] \to K$ is the evaluation map that sets $x_j= X_j$. The evaluation map is a $K$-algebra homomorphism.

Recall that in Section~\ref{sec-connected-sum} we decomposed an oriented pseudo-manifold $\Delta$ as a connected sum
\[ \Delta = \#_{i=1}^M \Pi_i.\]
The following result now follows from Lemma~\ref{lem-int-sum} and Lemma~\ref{lem-Pi}:

\begin{theorem} \label{thm-W-sum}
Let $\Delta$ be an oriented pseudo-manifold. Then
\[ W_\Delta = \sum_{i=1}^M W_{\Pi_i} = \sum_{i=1}^M \frac{1}{c_{\Pi_i}} \ev_{\Pi_i}.\]
\end{theorem}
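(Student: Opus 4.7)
The proof is essentially a two-line combination of Lemma~\ref{lem-int-sum} and Lemma~\ref{lem-Pi}; the only work is to align the three different vertex sets ($\Delta$, $\tilde{\Delta}$, and each $\Pi_i$) and the ambient polynomial rings so that both lemmas apply to the same piecewise polynomial function.

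My plan is as follows. Given a homogeneous $f \in K[x_1, \ldots, x_N]_n$, view $f$ as an element of $K[x_0, x_1, \ldots, x_N]_n$ that happens to be independent of the cone variable $x_0$. Since the vertex set of $\tilde{\Delta} = \bigcup_{i=1}^M \Pi_i$ is $\{v_0, v_1, \ldots, v_N\}$, this polynomial determines a piecewise polynomial function $\tilde{f} \in \cA^n(\tilde{\Delta})$. Because $\Delta$ does not contain $v_0$ and $f$ does not involve $x_0$, the restriction $\tilde{f}|_\Delta \in \cA^n(\Delta)$ is precisely the image of $f$ under the Stanley--Reisner quotient map $K[x_1, \ldots, x_N] \to \cA(\Delta)$. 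By definition of $W_\Delta$ as the composition of this quotient with $\pi_\Delta$, we obtain $W_\Delta(f) = \pi_\Delta(\tilde{f}|_\Delta)$.

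Now apply Lemma~\ref{lem-int-sum} to the piecewise polynomial $\tilde{f}$ to get
\[ W_\Delta(f) = \pi_\Delta(\tilde{f}|_\Delta) = \sum_{i=1}^M \pi_{\Pi_i}(\tilde{f}|_{\Pi_i}). \]
For each $i$, the restriction $\tilde{f}|_{\Pi_i} \in \cA^n(\Pi_i)$ is the image of $f$ in the Stanley--Reisner ring of $\Pi_i$ (whose variables are $x_0$ together with the $x_j$'s indexing vertices of $\Pi_i$), so $\pi_{\Pi_i}(\tilde{f}|_{\Pi_i}) = W_{\Pi_i}(f)$ by definition of $W_{\Pi_i}$. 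This proves the first equality $W_\Delta = \sum_i W_{\Pi_i}$. The second equality is then immediate from Lemma~\ref{lem-Pi}, which rewrites $W_{\Pi_i}$ as $\frac{1}{c_{\Pi_i}} \ev_{\Pi_i}$.

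The only bookkeeping step that deserves a sentence of care is the orientation compatibility. Lemma~\ref{lem-int-sum} requires $\Delta$ and the $\Pi_i$'s to be oriented compatibly, and this is exactly what the connected sum construction in Section~\ref{sec-connected-sum} guarantees: each maximal simplex $\sigma \in \Delta$ appears in exactly one $\Pi_i$ with the same orientation, while each maximal simplex of $\tilde{\Delta}$ containing the cone vertex $v_0$ appears in two $\Pi_i$'s with opposite orientations and hence cancels in $\pi_{\tilde{\Delta}}$. No genuine obstacle arises beyond this notational alignment.
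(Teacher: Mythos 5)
Your proposal is correct and takes the same route as the paper, which proves Theorem~\ref{thm-W-sum} in one sentence by citing Lemma~\ref{lem-int-sum} and Lemma~\ref{lem-Pi}; you are simply spelling out the bookkeeping (lifting $f$ to $\cA^n(\tilde{\Delta})$, identifying the restrictions $\tilde{f}|_\Delta$ and $\tilde{f}|_{\Pi_i}$ with the images of $f$ in the respective Stanley--Reisner rings, and invoking the orientation compatibility already built into the connected sum setup) that the paper leaves implicit.
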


In the theorem the map $\ev_{\Pi_i}$ acts on $K[x_1,\ldots,x_N]$ as a composition
\[ K[x_1,\ldots, x_N] \to K[x_{j_1},\ldots, x_{j_n}] \stackrel{\ev_{\Pi_i}}{\longrightarrow} K,\]
where the first map, the restriction to $\Pi_i$, sets $x_j=0$ if $v_j$ does not lie in $\Pi_i$. When $v_j = v_{j_l}$ is a vertex of $\Pi_i$, then $\ev_{\Pi_i}$ maps $x_j$ to $X_j$. However, the constants $X_j\in K$ depend not only on $j$ but also on all vertices of $\Pi_i$ and the orientation on $\Pi_i$.

\begin{rema} 
It is not too difficult to see that the previous theorem is nothing more than the integration $\pi_\Delta$ viewed as an evaluation map (see Remark~\ref{rem-eval}). Indeed, when we evaluate the summands of the map $\pi_\Delta$ (formula (\ref{eq-int}) on page \pageref{eq-int}) at the generic point $v_0$, we get the summands in the theorem.
\end{rema}

\subsection{The quadratic form $Q_l$}
Let $\Delta$ be an oriented pseudo-manifold of dimension $n-1$ over $K$, and let  $l=x_1+x_2+\cdots+x_N$. We define the quadratic form $Q_l$ on $K[x_1,\ldots, x_N]_m$ for $m\leq n/2$:
\[ Q_l(g) =  W_\Delta(l^{n-2m}g^2).\]
This form descends to a form on $H^m(\Delta)$, and in the case where $\Delta$ is not a homology sphere, to a form on the quotient space $\overline{H}^m(\Delta)$.

\begin{theorem} \label{thm-Q}
The quadratic form $Q_l$ on $K[x_1,\ldots, x_N]_m$ is
\[ Q_l (g) = \sum_{i=1}^M W_{\Pi_i} (l^{n-2m} g^2) = \sum_{i=1}^M \frac{1}{c_{\Pi_i}} \big[\ev_{\Pi_i} (l)\big]^{n-2m} \big[\ev_{\Pi_i} (g)\big]^2.\]
\end{theorem}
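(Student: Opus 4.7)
The statement is essentially an immediate corollary of Theorem~\ref{thm-W-sum} together with the multiplicativity of the evaluation maps $\ev_{\Pi_i}$, so my plan is to unpack the definitions and apply these two facts in sequence.

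First, I would observe that since $m \leq n/2$, the element $l^{n-2m} g^2$ is a well-defined homogeneous polynomial in $K[x_1,\ldots,x_N]_n$, so $W_\Delta$ can be applied to it. By the definition of $Q_l$ stated just before the theorem, we have
\[ Q_l(g) = W_\Delta(l^{n-2m} g^2). \]
Next, I would invoke Theorem~\ref{thm-W-sum}, which decomposes $W_\Delta$ as $\sum_{i=1}^M W_{\Pi_i} = \sum_{i=1}^M \frac{1}{c_{\Pi_i}} \ev_{\Pi_i}$. Applying this to $l^{n-2m} g^2$ yields the first equality in the theorem directly:
\[ Q_l(g) = \sum_{i=1}^M W_{\Pi_i}(l^{n-2m} g^2) = \sum_{i=1}^M \frac{1}{c_{\Pi_i}} \ev_{\Pi_i}(l^{n-2m} g^2). \]

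To obtain the second equality, I would use the fact, recorded in Section~\ref{sec-Pi}, that $\ev_{\Pi_i}$ is a $K$-algebra homomorphism. (It factors as a restriction map $K[x_1,\ldots,x_N] \to K[x_{j_1},\ldots,x_{j_n}]$, which is a ring homomorphism, followed by the substitution $x_{j_l} \mapsto X_{j_l}$, which is again a ring homomorphism.) Multiplicativity then gives
\[ \ev_{\Pi_i}(l^{n-2m} g^2) = [\ev_{\Pi_i}(l)]^{n-2m} \cdot [\ev_{\Pi_i}(g)]^2, \]
and substituting this in the previous display completes the proof.

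There is no real obstacle here; the content of the theorem is entirely in Theorem~\ref{thm-W-sum}, and this statement is just an algebraic repackaging of it in the form most useful for studying the quadratic form $Q_l$. The only small point worth noting (perhaps as a parenthetical remark) is that although the quadratic form lives on $H^m(\Delta)$ or $\overline{H}^m(\Delta)$, the formula is stated on the polynomial ring $K[x_1,\ldots,x_N]_m$; this is consistent because $W_\Delta$ vanishes on the Stanley-Reisner ideal together with the parameter ideal, so the right-hand side descends automatically.
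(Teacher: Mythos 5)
Your proof is correct and matches the paper's own argument: the first equality is just Theorem~\ref{thm-W-sum} applied to $l^{n-2m}g^2$, and the second equality follows because each $\ev_{\Pi_i}$ is a $K$-algebra homomorphism, which is exactly the one-line justification the paper gives. The additional remarks about degrees and descent to $\overline{H}^m(\Delta)$ are harmless elaborations, not a different route.
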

\begin{proof}
The second equality follows from the fact that the evaluation maps are $K$-algebra homomorphisms.
\end{proof}

The  theorem provides a diagonalization of the quadratic form $Q_l$.  Each map $\ev_{\Pi_i}$ defines a linear function on $K[x_1,\ldots, x_N]_m$. Let us call this function $z_i$. The quadratic form $Q_l$ is then 
\[ \sum_i d_i z_i^2,\]
where the coefficients are
\[ d_i = \frac{\big[\ev_{\Pi_i} (l)\big]^{n-2m}}{c_{\Pi_i}} \in K.\]
This expression of the quadratic form holds in any characteristic. It can be used, for example, to specialize the form from characteristic zero to characteristic $p$, assuming that $\Delta$ is oriented the same way in both characteristics. All coefficients in the form (the numerator and denominator of $d_i$, the coefficients of $z_i$) are polynomials in the variables $\a{i,j}$ with integer coefficients. If $g\in \ZZ[\a{i,j}][x_1,\ldots, x_N]_m$ is a polynomial such that $Q_l(g) = 0$ in $\QQ(\a{i,j})$, then $Q_l(\overline{g})= 0$ in $\FF_p(\a{i,j})$, where $\overline{g} = g \pmod p$.

The summands of the quadratic form $Q_l$ in Theorem~\ref{thm-Q} are defined over the field $K$ that includes the variables $\a{i, 0}$. However, the form itself does not depend on these variables and can be defined over the field $k(\a{i, j})_{i=1,\ldots,n; j=1,\ldots,N}$. The anisotropy of the form does not depend on which of the two fields we use.

\section{The conjecture of Papadakis and Petrotou}
\label{sec-conjecture}

We assume that the field $K=k(\a{i, j})$ has characteristic $2$ throughout this section. Papadakis and Petrotou study the values of the quadratic form $Q_l$ in $K$ and partial derivatives of these values with respect to $\a{i, j}$. 

Consider partial derivatives $\partial_{\a{i, j}}$ acting on $K=k(\a{i, j})$. Because of the characteristic $2$ assumption, these derivatives satisfy for any $f,g\in K$
\[ \partial_{\a{i, j}}^2 f=0, \quad \partial_{\a{i, j}} f^2=0, \quad \partial_{\a{i, j}} f^2 g= f^2 \partial_{\a{i, j}} g.\]

We will use capital letters $I, J, L$ to denote vectors of non-negative integers. Let $|J|$ be the number of components in the vector $J$. For $I=(i_1,\ldots,i_n)$ with $n$ components we let
\[ \partial_I = \partial_{\a{1, i_1}}  \partial_{\a{2, i_2}} \cdots \partial_{\a{n, i_n}}.\]
For $J=(j_1,\ldots, j_s)$, let $x_J$ be the degree $s$ monomial
\[ x_J =  x_{j_1} x_{j_2} \cdots x_{j_s}.\]
Note that $I$ and $J$ may contain repeated elements and $s$ may be larger than $n$.
There is some redundancy in this notation because $x_J$ only depends on $J$ up to permutation of components. However, $\partial_I$ does depend on the order of components in $I$.  
We write $\sqrt{x_J}$ for the monomial whose square is $x_J$ if such a monomial exists.

The following was stated in \cite{PapadakisPetrotou} as Conjecture 14.1 in case of homology spheres $\Delta$. We generalize it to the case of pseudo-manifolds, which by the characteristic $2$ assumption are automatically oriented.

\begin{theorem}[Conjecture of Papadakis and Petrotou] \label{thm-PPconj}
Let $\Delta$ be a  pseudo-manifold of dimension $n-1$ over $K$. For any integer vectors $I, J$ with $n$ components
\[ \partial_I W_\Delta(x_J) = \begin{cases} 
(W_\Delta (\sqrt{x_I x_J}))^2 & \text{if $\sqrt{x_I x_J}$ exists,} \\
0 & \text{otherwise.}
\end{cases}
\]
\end{theorem}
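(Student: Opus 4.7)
The plan is to use the connected-sum decomposition of Section~\ref{sec-connected-sum} to reduce the statement for a general pseudo-manifold $\Delta$ to the case $\Delta=\Pi=\partial(n\text{-simplex})$, and then verify the identity for $\Pi$ by explicit computation with the formula of Lemma~\ref{lem-Pi}.

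For the reduction, write $\Delta=\#_{i=1}^M \Pi_i$ as in Section~\ref{sec-connected-sum}; Theorem~\ref{thm-W-sum} gives $W_\Delta=\sum_i W_{\Pi_i}$. Since $\partial_I$ is linear and since Frobenius additivity $(\sum_i y_i)^2=\sum_i y_i^2$ holds in characteristic $2$, both sides of the claimed identity are additive across the connected-sum pieces, so it suffices to prove the identity in the case $\Delta=\Pi$.

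For $\Delta=\Pi$, Lemma~\ref{lem-Pi} gives $W_\Pi(x_J)=\prod_{j=0}^n X_j^{m_j}$ with $m_j=\#_j J-1$, where $X_j$ is the signed maximal minor of $A$ omitting column $j$ and $\#_j J$ denotes the multiplicity of $j$ in $J$. Writing $m_j=2q_j+r_j$ with $r_j\in\{0,1\}$, set $F=\prod_j X_j^{q_j}$ and $G=\prod_{j\in S} X_j$ with $S=\{j:r_j=1\}$. Then $W_\Pi(x_J)=F^2 G$. Since $\partial$ kills squares in characteristic $2$, the Leibniz rule iterates to $\partial_I(F^2 G)=F^2\partial_I G$. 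A direct bookkeeping check shows that when $\sqrt{x_I x_J}$ exists---equivalently, when $\#_j I$ is even for $j\in S$ and odd for $j\notin S$---the target value is $W_\Pi(\sqrt{x_I x_J})^2=F^2\cdot\prod_{j\in S}X_j^{\#_j I}\prod_{j\notin S}X_j^{\#_j I-1}$, which is itself a perfect square. The statement thereby reduces to the polynomial identity $\partial_I G=\prod_{j\in S}X_j^{\#_j I}\prod_{j\notin S}X_j^{\#_j I-1}$ when $\sqrt{x_I x_J}$ exists, and $\partial_I G=0$ otherwise.

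The main obstacle is proving this polynomial identity. My approach is to compute $\partial_I G$ via the infinitesimal substitution $a_{r,i_r}\mapsto a_{r,i_r}+\epsilon_r$ with $\epsilon_r^2=0$, so that $\partial_I G$ becomes the coefficient of $\epsilon_1\cdots\epsilon_n$ in $\prod_{j\in S}X_j(A+\sum_r\epsilon_r E_{r,i_r})$. Multilinearity of $X_j$ in the rows of $A$ expands this as a sum over labelled set partitions $\{1,\ldots,n\}=\bigsqcup_{j\in S}T_j$ of products $\prod_{j\in S}X_j^{(T_j)}$ of signed minors, each of which vanishes unless the column indices $\{i_r:r\in T_j\}$ are distinct and disjoint from $\{j\}$. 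The hard part is to show this combinatorial sum collapses to the predicted product (or vanishes when the square-root condition fails); I expect the key inputs to be the kernel identity $\sum_j a_{r,j}X_j=0$ together with parity arguments specific to characteristic $2$, packaged either as an induction on $|S|$ or as a pairing-up argument on the \emph{bad} partitions.
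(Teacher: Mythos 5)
Your reduction to $\Delta=\Pi$ is correct and matches the paper's Lemma~\ref{lem-redPi}, and the algebraic setup that converts the statement for $\Pi$ into the polynomial identity $\partial_I G=\prod_{j\in S}X_j^{\#_j I}\prod_{j\notin S}X_j^{\#_j I-1}$ (and $0$ when the square root fails) is accurate; note that $|S|$ is automatically odd, so this is essentially the paper's Theorem~\ref{thm-simplest}. But the proof stops exactly where the actual content of the theorem begins. You write that the combinatorial sum over labelled set partitions should collapse and that you ``expect'' the kernel identity plus parity arguments to carry the day, but no such argument is supplied. Multilinearity in the rows gives a large sum of products of minors, and there is nothing in your proposal that demonstrates the cancellations; this is precisely the nontrivial part of the theorem, and asserting an expectation is not a proof.

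For comparison, the paper handles the identity by a quite different and self-contained route. First, Lemma~\ref{lem-2deriv} (using the Pl\"ucker and flag relations) shows that two partial derivatives in the same row kill $X_J$ when $|J|$ is odd. This implies that $\partial_I X_J$ is $\SL(n,k)$-invariant, so by the first fundamental theorem of invariant theory it is a polynomial in $X_0,\dots,X_n$; a multigrading argument then forces it to be a single monomial $X_M$ (or $0$), and Lemma~\ref{lem-2deriv} again forces $X_M$ to be a square. The nonvanishing is finally handled by induction on $n$. Your route via multilinear expansion might in principle be made to work, but you would likely need to independently discover and deploy relations of the same flavor as (\ref{eq-Plucker}) and (\ref{eq-flag}) to organize the cancellations, and you would still need an argument that the surviving terms form the claimed monomial. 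As it stands this is a genuine gap, not a variant proof.
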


We will prove the conjecture below. Let us first see that it implies Theorem~\ref{thm-lef}. The argument here is similar to the proof of Theorem~\ref{thm-PP} in \cite{PapadakisPetrotou}. In fact, it is very natural to extend this theorem to the case of pseudo-manifolds $\Delta$ as in \cite{APP}. Recall that in Section~\ref{sec-mixed} we defined for any oriented pseudo-manifold $\Delta$ the algebra $\overline{H}(\Delta)$. This is equal to the algebra $H(\Delta)$ if $\Delta$ is a homology sphere.

\begin{coro} \label{cor-PPconj}
Let $h \in K[x_1,\ldots, x_N]_m$ for  some $0 \leq m \leq n/2$, and let $I, J$ have $n, n - 2m$ components, respectively. Then
\[ \partial_I W_\Delta(h^2 x_J) = (W_\Delta(h \sqrt{x_I x_J}))^2 \]
if $\sqrt{x_I x_J}$ exists, and is otherwise zero.
\end{coro}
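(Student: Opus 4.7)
The plan is to reduce Corollary \ref{cor-PPconj} directly to Theorem \ref{thm-PPconj} by expanding $h^2$ as a sum of squares, exploiting the characteristic $2$ derivation rules. The only tools needed are the $K$-linearity of $W_\Delta$, the identities $\partial_{\a{i,j}}(f^2) = 0$ and $\partial_{\a{i,j}}(f^2 g) = f^2 \partial_{\a{i,j}} g$ in characteristic $2$, and the Frobenius identity $(\sum f_i)^2 = \sum f_i^2$.

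First, I would write $h = \sum_L c_L x_L$ with $c_L \in K$ and $L$ ranging over multi-indices with $|L| = m$. In characteristic $2$ the cross terms vanish, so $h^2 = \sum_L c_L^2 x_L^2$, and therefore
\[ W_\Delta(h^2 x_J) = \sum_L c_L^2 \, W_\Delta(x_{L L J}), \]
where $LLJ$ denotes the concatenation (a multi-index with $2m + (n-2m) = n$ components). Next, since each factor $\partial_{\a{i,j}}$ annihilates any perfect square and treats squares as scalars, iterating gives $\partial_I(c_L^2 \, u) = c_L^2 \, \partial_I u$ for any $u \in K$, so
\[ \partial_I W_\Delta(h^2 x_J) = \sum_L c_L^2 \, \partial_I W_\Delta(x_{LLJ}). \]

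Now I would apply Theorem \ref{thm-PPconj} to each term with the vectors $I$ (of length $n$) and $LLJ$ (of length $n$). The monomial $x_I x_{LLJ} = x_L^2 \cdot x_I x_J$ has a square root if and only if $\sqrt{x_I x_J}$ exists, in which case $\sqrt{x_I x_{LLJ}} = x_L \sqrt{x_I x_J}$. Thus, when $\sqrt{x_I x_J}$ exists,
\[ \partial_I W_\Delta(h^2 x_J) = \sum_L c_L^2 \bigl(W_\Delta(x_L \sqrt{x_I x_J})\bigr)^2 = \left(\sum_L c_L W_\Delta(x_L \sqrt{x_I x_J})\right)^2, \]
where in the last equality I use the characteristic $2$ Frobenius identity. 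Pulling $\sqrt{x_I x_J}$ out of the inner $K$-linear sum yields $W_\Delta(h \sqrt{x_I x_J})^2$, as desired. If $\sqrt{x_I x_J}$ does not exist, every term of the sum vanishes by Theorem \ref{thm-PPconj}.

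There is no real obstacle here beyond bookkeeping; the content of the statement sits entirely in Theorem \ref{thm-PPconj}, and the corollary is essentially the observation that in characteristic $2$ the operator $W_\Delta(h^2 \cdot -)$ commutes with taking square roots under $\partial_I$. The only point requiring a bit of care is verifying that the length-counting lines up so that Theorem \ref{thm-PPconj} applies term-by-term, and that one correctly identifies $\sqrt{x_I x_{LLJ}}$ with $x_L\sqrt{x_I x_J}$; both are straightforward.
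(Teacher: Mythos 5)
Your proof is correct and follows essentially the same route as the paper: expand $h$ into monomials, use the characteristic-$2$ Frobenius to collapse $h^2$ into a sum of squared monomials, pull the constant squares through $\partial_I$, apply Theorem~\ref{thm-PPconj} term-by-term with the $n$-component vectors $I$ and $(L,L,J)$, and regroup via Frobenius. The only differences from the paper's proof are notational.
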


\begin{proof}
Write $h$ as a linear combination of monomials, $h=\sum_L b_L x_L$. Then
\[ W_\Delta(h^2 x_J) = W_\Delta ( \sum_L b_L^2 x_L^2 x_J) = \sum_L b_L^2 W_\Delta (x_L^2 x_J).\]
Applying the derivative $\partial_I$ to this and using  Theorem~\ref{thm-PPconj}, we get
\[  \sum_L b_L^2 \partial_I W_\Delta (x_L^2 x_J) = \sum_L b_L^2 (W_\Delta (x_L \sqrt{x_I x_J}))^2 = (W_\Delta ( \sum_L b_L x_L \sqrt{x_I x_J}))^2\]
if the square root exists, and zero otherwise.
\end{proof}

The previous corollary shows why Theorem~\ref{thm-PPconj} is well suited for proving anisotropy theorems in characteristic $2$. The expression $W_\Delta(h \sqrt{x_I x_J})$ on the right hand side is the \Po pairing between $h$ and $\sqrt{x_I x_J}$. If $W_\Delta(h^2 x_J)$ on the left hand side is zero, then $h$ is orthogonal to $\sqrt{x_I x_J}$ for all $I$.

\begin{coro} \label{cor-reduce}
Let $\Delta$ be a pseudo-manifold of dimension $n-1$ over $K$. Consider $g\in K[x_1,\ldots, x_N]_m$ such that 
\[ l^p g^2 = 0 \quad \text{in $\overline{H}^{p+2m}(\Delta)$}\]
for some  $0\leq p \leq n-2m$. 
Let $q = \lfloor \frac{p}{2}\rfloor$. Then
\[ l^q g = 0 \quad \text{in $\overline{H}^{q+m}(\Delta)$.}\]
\end{coro}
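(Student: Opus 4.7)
The plan is to apply Corollary~\ref{cor-PPconj} to the element $h := l^q g$ of degree $q+m$ and thereby show that $W_\Delta(hz) = 0$ for every monomial $z$ of degree $n - q - m$; by nondegeneracy of the \Po pairing in the Gorenstein algebra $\overline H(\Delta)$ of socle degree $n$, this will yield $l^q g = 0$ in $\overline H^{q+m}(\Delta)$. The range condition $q + m \leq n/2$ needed to apply the corollary follows from $2q \leq p \leq n-2m$. I would then split into two cases based on the parity of $p$.

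In the even case $p = 2q$, the hypothesis becomes $h^2 = 0$ in $\overline H^{2(q+m)}$, so $W_\Delta(h^2 x_J) = 0$ for every monomial $x_J$ of degree $n - 2(q+m)$. Applying $\partial_I$ for an arbitrary multi-index $I$ with $|I| = n$ and invoking Corollary~\ref{cor-PPconj}, I would obtain $(W_\Delta(h \sqrt{x_I x_J}))^2 = 0$ whenever $\sqrt{x_I x_J}$ exists; extracting a square root in characteristic $2$ gives $W_\Delta(h \sqrt{x_I x_J}) = 0$. Every monomial $z$ of degree $n - q - m$ can be realized as $\sqrt{x_I x_J}$ by partitioning the $2(n - q - m)$ factors of $z^2$ into an $n$-tuple $I$ and an $(n-2(q+m))$-tuple $J$, so $W_\Delta(hz) = 0$ as required.

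The odd case $p = 2q+1$ will be the main obstacle. The hypothesis now reads $lh^2 = 0$ in $\overline H^{2(q+m)+1}$, so $W_\Delta(lh^2 x_{J'}) = 0$ for every monomial $x_{J'}$ of degree $n - 2(q+m) - 1$. Expanding $l = \sum_i x_i$, applying $\partial_I$ with $|I| = n$, and invoking Corollary~\ref{cor-PPconj} on each summand should yield $\sum_i (W_\Delta(h \sqrt{x_I x_i x_{J'}}))^2 = 0$, the sum running over those $i$ for which $\sqrt{x_I x_i x_{J'}}$ exists; taking the square root in characteristic $2$ gives $\sum_i W_\Delta(h \sqrt{x_I x_i x_{J'}}) = 0$. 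The crucial observation is that $\sqrt{x_I x_i x_{J'}}$ exists precisely when $x_i$ is the unique variable of odd multiplicity in $x_I x_{J'}$, so for each fixed $(I, J')$ there is at most one surviving index $i$ and the sum collapses to a single term. To realize an arbitrary monomial $z$ of degree $n - q - m \geq 1$, I would pick any $x_{i_0}$ dividing $z$ and split $z^2/x_{i_0}$ into an $n$-tuple $I$ and an $(n - 2(q+m) - 1)$-tuple $J'$ (such a split exists since $p \leq n-2m$); then $x_{i_0}$ is the unique odd-multiplicity variable in $x_I x_{J'}$, so the relation reads $W_\Delta(hz) = 0$, as desired.
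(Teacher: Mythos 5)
Your proof is correct and is essentially the paper's own argument: split on the parity of $p$, apply Corollary~\ref{cor-PPconj} to $h = l^q g$, and in the odd case choose the split of $z^2/x_{i_0}$ so that exactly one index $i$ produces a perfect square, collapsing the sum to $W_\Delta(hz)$. Your formulation is slightly cleaner in that you introduce $h$ from the start and explicitly verify the degree bound $q+m \leq n/2$, which the paper leaves implicit.
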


\begin{proof}
If $l^p g^2 = 0$ in $\overline{H}^{p+2m}(\Delta)$, then for any integer vectors $I$ and $J$ with respectively $n$ and $n-2m-p$ components,
\[ \partial_I W_\Delta(l^p g^2 x_J) = 0. \]
If $p$ is even, then $p/2 = q$ and by Corollary~\ref{cor-PPconj},
\[ \partial_I W_\Delta(l^p g^2 x_J) = \partial_I W_\Delta((l^q g)^2 x_J) = (W_\Delta( l^q g \sqrt{x_I x_J}))^2 = 0 \]
if the square root exists, and so
\[ W_\Delta( l^q g \sqrt{x_I x_J}) = 0. \]
Since $I$ and $J$ can be chosen such that $x_I x_J = x_L^2$  for any of the monomials $x_L$ generating $\overline{H}^{n-q-m}(\Delta)$,  we conclude that $l^q g = 0$ in $\overline{H}^{q+m}(\Delta)$.

If $p$ is odd, then $l^p g^2 = (l^q g)^2 \cdot l = (l^q g)^2 \sum_{i=1}^N x_i$. Applying Corollary~\ref{cor-PPconj}, we have
\[ \partial_I W_\Delta(l^p g^2 x_J) = \sum_{i=1}^N  (W_\Delta(l^q g \sqrt{ x_i x_I x_J} ))^2 \]
if the square root exists. Let $L$ have $n-q-m$ components, and let $j$ be one of these components of $L$. Choose $I$ and $J$ such that $x_I x_J = x_L^2/x_j$. Then $\sqrt{ x_j x_I x_J}  = x_L$ and $\sqrt{ x_i x_I x_J}$ does not exist when $i \neq j$, so in this case
\[ \partial_I W_\Delta(l^p g^2 x_J)
 =   (W_\Delta(l^q g x_L))^2 = 0. \]
Hence,
\[ W_\Delta(l^q g x_L) = 0 \]
for any $x_L$. We conclude again that $l^q g = 0$ in $\overline{H}^{q+m}(\Delta)$.
\end{proof}

We now prove a generalization of Theorem~\ref{thm-lef} in the case of pseudo-manifolds.

\begin{theorem}
Let $\Delta$ be a pseudo-manifold of dimension $n-1$ over $K$. Then the quadratic form $Q_l$ on $\overline{H}^m(\Delta)$ is anisotropic  for any $m\leq n/2$.
\end{theorem}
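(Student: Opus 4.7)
The plan is to iterate Corollary~\ref{cor-reduce}, halving the exponent of $l$ at each step until it reaches zero.

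First, suppose $g \in K[x_1,\ldots,x_N]_m$ represents a class in $\overline{H}^m(\Delta)$ with $Q_l(g) = W_\Delta(l^{n-2m}g^2) = 0$. Since $\overline{H}(\Delta)$ is a graded Artinian Gorenstein $K$-algebra of socle degree $n$ whose socle is identified with $K$ via $W_\Delta$, this vanishing is equivalent to $l^{n-2m} g^2 = 0$ in $\overline{H}^n(\Delta)$. Applying Corollary~\ref{cor-reduce} with $p = n - 2m$ gives a first reduction: $l^{q_0} g = 0$ in $\overline{H}^{q_0 + m}(\Delta)$, where $q_0 = \lfloor (n-2m)/2 \rfloor$.

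Next I would iterate. Suppose that $l^q g = 0$ in $\overline{H}^{q+m}(\Delta)$ for some $q$ with $0 \leq q \leq (n-2m)/2$. Multiplying this relation by $g \in \overline{H}^m(\Delta)$, which is legal because $q + 2m \leq n$, yields $l^q g^2 = 0$ in $\overline{H}^{q+2m}(\Delta)$. A further application of Corollary~\ref{cor-reduce}, this time with $p = q$, replaces $q$ by $\lfloor q/2 \rfloor$. Since $q$ at least halves at every step, after $\lceil \log_2(q_0+1)\rceil$ iterations it drops to $0$, and the conclusion of the corollary becomes $g = 0$ in $\overline{H}^m(\Delta)$, which is precisely the anisotropy of $Q_l$.

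The substance of the argument is already packaged inside Corollary~\ref{cor-reduce}, so the only work left is bookkeeping: one must check at each step that the hypothesis $p \leq n-2m$ of the corollary is satisfied (immediate from $q_k \leq q_0 \leq (n-2m)/2 \leq n-2m$), and that the squaring step $(l^q g)\cdot g = l^q g^2$ stays inside the socle-bounded range (immediate from $m \leq n/2$). I do not anticipate any genuine obstacle beyond these routine degree checks; the real content has been absorbed into Theorem~\ref{thm-PPconj} and its corollaries.
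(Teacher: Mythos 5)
Your proof is correct and matches the paper's argument exactly: both iterate Corollary~\ref{cor-reduce}, first passing from $l^{n-2m}g^2=0$ to $l^{q_0}g=0$ with $q_0=\lfloor(n-2m)/2\rfloor$, then squaring and repeating until the exponent of $l$ hits zero. The paper condenses the iteration to ``continuing this way''; you spell out the bookkeeping (that each $q_k\le q_0\le n-2m$ keeps the corollary applicable, and that the exponent strictly decreases), which is a harmless elaboration rather than a different route.
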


\begin{proof}
Let $g \in \overline{H}^m(\Delta)$ be an isotropic element for $Q_l$,
\[ l^{n-2m} g^2 = 0.\]
Corollary~\ref{cor-reduce} implies that $l^q g = 0$, where $q = \lfloor \frac{n-2m}{2}\rfloor$. In particular, $l^q g^2 = 0$. Continuing this way we reduce the power of $l$ to zero and hence $g=0$.
\end{proof}

The rest of this section consists of the proof of Theorem~\ref{thm-PPconj}.

\subsection{Reductions} We start by reducing Theorem~\ref{thm-PPconj} to simpler cases. First notice that all expressions in Theorem~\ref{thm-PPconj} are defined over the field $\FF_2$. Hence we may assume that $k=\FF_2$. 

Recall that we wrote $W_\Delta = \sum_i W_{\Pi_i}$ in Theorem~\ref{thm-W-sum}. 

\begin{lemma} \label{lem-redPi}
Theorem~\ref{thm-PPconj} for all $\Pi_i$ implies it for $\Delta$.
\end{lemma}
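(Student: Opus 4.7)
The plan is to combine two facts: the mixed-volume decomposition $W_\Delta = \sum_{i=1}^M W_{\Pi_i}$ from Theorem~\ref{thm-W-sum}, and the Frobenius identity $(a+b)^2 = a^2 + b^2$, which is available because $\chara K = 2$ and which in particular gives $\sum b_i^2 = (\sum b_i)^2$. Since $\partial_I$ is $k$-linear, the decomposition immediately yields
\[ \partial_I W_\Delta(x_J) = \sum_{i=1}^M \partial_I W_{\Pi_i}(x_J). \]

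I would then split on whether $\sqrt{x_I x_J}$ exists, mirroring the two cases of Theorem~\ref{thm-PPconj}. If it does not, then the assumption that the theorem holds for each $\Pi_i$ makes every summand on the right vanish, finishing this case. If $\sqrt{x_I x_J}$ does exist, each summand equals $(W_{\Pi_i}(\sqrt{x_I x_J}))^2$, and the characteristic~$2$ identity lets me pull the square outside the sum:
\[ \sum_{i=1}^M (W_{\Pi_i}(\sqrt{x_I x_J}))^2 = \Big( \sum_{i=1}^M W_{\Pi_i}(\sqrt{x_I x_J}) \Big)^2 = (W_\Delta(\sqrt{x_I x_J}))^2, \]
invoking Theorem~\ref{thm-W-sum} again in the last equality. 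Both sides then match the statement of Theorem~\ref{thm-PPconj} for $\Delta$.

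The only subtlety I anticipate is bookkeeping about which indices appear in which $\Pi_i$: the simplex $\Pi_i$ uses only $n{+}1$ vertices of $\Delta$ together with the auxiliary cone vertex $v_0$, while $I$ and $J$ carry indices ranging over all vertices of $\Delta$. If a component of $I$ corresponds to a vertex not in $\Pi_i$, the associated variable $a_{k,j}$ simply does not occur in any coefficient of $W_{\Pi_i}$, so $\partial_I W_{\Pi_i}(x_J) = 0$; if a component of $J$ does, then $x_J$ restricts to $0$ on $\Pi_i$ and again $\partial_I W_{\Pi_i}(x_J) = 0$. In either situation $\sqrt{x_I x_J}$ also contains a variable restricting to $0$ on $\Pi_i$, so $W_{\Pi_i}(\sqrt{x_I x_J}) = 0$, and Theorem~\ref{thm-PPconj} applied to $\Pi_i$ remains a correct (trivial) equality. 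Hence these terms contribute $0$ to both sides and the deduction is purely formal, with no genuine obstacle beyond being careful that ``Theorem~\ref{thm-PPconj} for $\Pi_i$'' is interpreted over the ambient variable set of $\Delta$.
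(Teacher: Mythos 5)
Your proof is correct and follows essentially the same route as the paper: combine the decomposition $W_\Delta = \sum_i W_{\Pi_i}$ from Theorem~\ref{thm-W-sum} with the Frobenius identity in characteristic $2$, checking that the square-root condition for $x_Ix_J$ behaves compatibly under restriction to each $\Pi_i$ (the paper condenses this to a one-line observation). The only slip is a harmless miscount: $\Pi_i$ is the boundary of the $n$-simplex $\{v_0\} * \sigma_i$, so it has $n$ vertices of $\Delta$ plus $v_0$, not $n{+}1$ vertices of $\Delta$ plus $v_0$.
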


\begin{proof}
This follows directly from the statement of the theorem using the characteristic $2$ assumption and the  observation that if a monomial $x_Ix_J$ restricts to a nonzero monomial on $\Pi_i$, then the monomial is a square if and only if its restriction is a square.
\end{proof}

From now on we will assume that $\Delta = \Pi$ as in Section~\ref{sec-Pi}. Assume that $\Pi$ has vertices $v_0, v_1, \ldots, v_n$. The matrix $A=(\a{i, j})$ has size $n\times (n+1)$, with columns indexed by $0,1,\ldots,n$ and rows by $1,\ldots,n$. We use the notation $X_j$, $j=0,1,\ldots,n$, for the determinant of $A$ with its $j$-th column removed. If $J= (j_1, \ldots, j_s)$ is a vector with entries in $\{0,1,\ldots,n\}$, we let
\[ X_J = X_{j_1} X_{j_2} \cdots X_{j_s}.\]

Theorem~\ref{thm-PPconj} for $\Pi$ can be further reduced to the following:

\begin{theorem} \label{thm-simplest}
Let $I$ and $J$ be vectors with entries in $\{0,1,\dots,n\}$. Assume that $|I|=n$ and $|J|$ is odd.  Then
  \[ \partial_I  X_J  = 
  \begin{cases}
    (X_L)^2 & \text{if $X_I X_J = X_L^2 X_{(0,1,\dots,n)}$}, \\
    0         &  \text{otherwise.}
  \end{cases}\]
\end{theorem}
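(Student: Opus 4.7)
I would proceed by introducing the auxiliary generating polynomial
\[ D(y) = \sum_{j=0}^n y_j X_j \in K[y_0, \ldots, y_n], \]
so that $D(y)^s = \sum_{J \text{ ord},\, |J|=s} y^J X_J$ and consequently $\partial_I D(y)^s = \sum_J y^J \partial_I X_J$. The theorem then becomes a statement about $\partial_I D(y)^s$. Since $s = 2m+1$ is odd, I would factor $D^s = D \cdot (D^m)^2$ and invoke the fact that in characteristic $2$ derivatives annihilate squares ($\partial(f^2) = 2f \partial f = 0$). Iterating Leibniz then yields the master identity
\[ \partial_I D^s = (D^m)^2 \cdot \partial_I D. \]

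Next, I would compute each factor. Viewing $D(y)$ as the $(n+1) \times (n+1)$ determinant whose first row is $(y_0, \ldots, y_n)$ and whose remaining rows are given by $A$, the iterated derivative $\partial_I D$ extracts the coefficient of $\prod_i a_{i,i_i}$; Laplace expansion shows this equals $y_{j_0^\ast}$ whenever $I = (i_1, \ldots, i_n)$ has $n$ distinct entries (with $j_0^\ast$ the unique index of $\{0,\ldots,n\}$ missing from $I$) and vanishes otherwise. Meanwhile, the characteristic-$2$ identity $(\sum a_i)^2 = \sum a_i^2$ gives $(D^m)^2 = \sum_{L' \text{ ord},\, |L'|=m} y^{2L'} X_{L'}^2$.

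Substituting, in the case when $I$ has distinct entries, we get
\[ \sum_J y^J \partial_I X_J = y_{j_0^\ast} \sum_{L'} y^{2L'} X_{L'}^2. \]
Matching coefficients of $y^M$ for a multiset $M$ of size $s$, this relates $\partial_I X_M$ to $X_{L'_M}^2$ modulo the multinomial coefficient $\binom{s}{m_\bullet(M)} \pmod 2$, where $L'_M$ is the unique multiset (if any) with $M = \{j_0^\ast\} \cup 2L'_M$. Kummer's theorem shows $\binom{s}{m_\bullet(M)} \equiv \binom{m}{m_\bullet(L'_M)} \pmod 2$, since the binary expansions of the $m_j(M)$'s are obtained from those of the $m_j(L'_M)$'s by a single left-shift plus a single $+1$ at position $j_0^\ast$ (which introduces no carry). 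This yields $\partial_I X_M = X_{L'_M}^2$ (or $0$ when $L'_M$ does not exist) in every case where the multinomial is odd in $\FF_2$.

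The remaining cases---multisets $M$ with $\binom{s}{m_\bullet(M)} \equiv 0 \pmod 2$, and tuples $I$ with repeated entries (where $\partial_I D = 0$ and the GF collapses to $0 = 0$)---I would handle via the recurrence obtained by applying $\partial_I$ to the null relation $\sum_j a_{i,j} X_j X_{J'} = 0$:
\[ \sum_{j=0}^n a_{i,j}\, \partial_I X_{(j, J')} = \partial_{\hat I_i} X_{(i_i, J')}, \]
where $\hat I_i$ is $I$ with its $i$-th entry removed. One checks directly that the candidate formula $C(I, J) = X_L^2$ (or $0$) satisfies this recurrence; combined with the GF-based cases, this pins down $\partial_I X_J = C(I, J)$. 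I expect the main obstacle to be precisely this last step: the $n$ row-equations from the recurrence have a one-dimensional kernel spanned by $(X_0, \ldots, X_n)$, so closing the argument requires a careful induction on $|J|$ (with the base case $|J|=1$ verified by a direct permutation count, as already done implicitly in the proof of Lemma~\ref{lem-Pi}) to propagate the one scalar pinned down by the GF identity to all cases simultaneously.
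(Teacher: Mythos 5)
Your generating-function identity is correct and is a genuinely different starting point from the paper, which instead proves that $\partial_I X_J$ is an $\SL(n)$-invariant (via a Pl\"ucker-relation lemma showing $\partial_{a_{r,i_1}}\partial_{a_{r,i_2}}X_J=0$ for odd $|J|$) and then invokes the first fundamental theorem of invariant theory together with the $\ZZ^{n+1}$-multigrading to reduce to a single monomial $cX_M$ with $c\in\FF_2$. However, your argument has a real gap at exactly the place you flag. Matching coefficients in $\partial_I D^s = (D^m)^2\partial_I D$ gives $\binom{s}{m(N)}\partial_I X_N = \binom{m}{m(M)}X_M^2$ (or $\binom{s}{m(N)}\partial_I X_N=0$), and your Kummer computation is fine, but this only determines $\partial_I X_N$ when $\binom{s}{m(N)}$ is odd. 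The theorem still asserts a nonzero answer when that multinomial is even: for instance with $J'=(0,0,1,1)$ and $n\ge 2$, every choice of $j$ makes $\binom{5}{m((j,J'))}$ even, so the generating function yields no anchor at all for the $(n+1)$-vector $\bigl(\partial_I X_{(j,J')}\bigr)_j$ that the recurrence is supposed to pin down modulo the kernel $\langle(X_0,\dots,X_n)\rangle$.

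The recurrence itself also does not support the induction you describe. Its right-hand side $\partial_{\hat I_i}X_{(i_i,J')}$ involves a derivative of order $n-1$ applied to a monomial of the same length $|(i_i,J')|=|J|$; neither $|J|$ nor $n$ drops, and the expression is no longer of the form covered by the theorem (which requires exactly $n$ derivative factors, one per row). So ``checking that the candidate formula satisfies the recurrence'' presupposes a formula for these off-pattern quantities $\partial_{\hat I_i}X_{(i_i,J')}$ that you have not supplied, and the promised induction on $|J|$ has no decreasing parameter. By contrast, the paper's invariant-theoretic reduction handles repeated indices and even multinomials uniformly, with the only induction (on $n$, by deleting a row and column of $A$) confined to the final nonvanishing step. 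Your first two paragraphs could plausibly be salvaged as an alternative derivation of the ``easy'' direction, but as written the proposal does not close the argument.
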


\begin{lemma} \label{lem-conj-Pi}
Theorem~\ref{thm-simplest} implies Theorem~\ref{thm-PPconj} for $\Pi$.
\end{lemma}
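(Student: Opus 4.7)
The plan is to reduce the identity for $W_\Pi$ to Theorem~\ref{thm-simplest} by clearing the denominator in a way that is invisible to characteristic $2$ derivatives. First I would set $c = X_{(0,1,\ldots,n)} = X_0 X_1 \cdots X_n$, so that by Lemma~\ref{lem-Pi} one has $W_\Pi(x_J) = X_J / c$ for $|J|=n$. The key trick is to rewrite this as $X_J \cdot c / c^2 = X_{J'}/c^2$, where $J'$ is the concatenation of $J$ with $(0,1,\ldots,n)$; note $|J'| = 2n+1$ is odd, which is exactly what Theorem~\ref{thm-simplest} demands.

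Next I would observe that in characteristic $2$, any single derivative $D = \partial_{\a{i,j}}$ annihilates the square $c^2$, since $D(c^2) = 2c\, Dc = 0$, and so the quotient rule gives $D(f/c^2) = Df/c^2$. Iterating this through the $n$ factors making up $\partial_I$ yields
\[ \partial_I W_\Pi(x_J) = \frac{\partial_I X_{J'}}{c^2}. \]
Now applying Theorem~\ref{thm-simplest} to the pair $(I, J')$ evaluates $\partial_I X_{J'}$ to $X_L^2$ exactly when $X_I X_{J'} = X_L^2 \cdot X_{(0,1,\ldots,n)}$, and to $0$ otherwise.

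The final step is to translate the index condition: $X_I X_{J'} = X_L^2 \cdot c$ simplifies to $X_I X_J = X_L^2$, i.e.\ every index $0, 1, \ldots, n$ occurs in the multiset $I \cup J$ an even number of times. This is precisely the condition for the monomial $x_I x_J$ to be a perfect square, in which case $\sqrt{x_I x_J} = x_L$ with $|L| = n$. Then
\[ \partial_I W_\Pi(x_J) = \frac{X_L^2}{c^2} = \bigl(W_\Pi(x_L)\bigr)^2 = \bigl(W_\Pi(\sqrt{x_I x_J})\bigr)^2, \]
matching the assertion of Theorem~\ref{thm-PPconj}; in the complementary case both sides vanish.

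I do not expect a serious obstacle. The only nontrivial ingredient is the characteristic $2$ observation that squares are constants for all partial derivatives, which is what allows me to trade the denominator $c$ for $c^2$ and commute it past $\partial_I$. Once that is in hand, the conclusion is a direct combinatorial match between the index condition in Theorem~\ref{thm-simplest} and the square-root condition in Theorem~\ref{thm-PPconj}.
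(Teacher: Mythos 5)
Your proof is correct and follows essentially the same route as the paper: rewrite $W_\Pi(x_J)=X_J/c_\Pi$ as $X_{J'}/c_\Pi^2$ with $J'$ the concatenation of $J$ and $(0,\ldots,n)$, pull the square $c_\Pi^2$ past $\partial_I$ using the characteristic~$2$ product rule, and then read off the index condition from Theorem~\ref{thm-simplest}. The paper phrases this as "the two equations differ by a factor of $c_\Pi^2$," which is exactly the bookkeeping you carried out explicitly.
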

\begin{proof}
Let $I$ and $J$ be as in the statement of Theorem~\ref{thm-PPconj}, and let $J'$ be such that $X_{J'} = X_J X_{(0,1,\dots,n)}$. Note that $|J'|=2n+1$ is odd. We claim that Theorem~\ref{thm-PPconj} for $I,J$ is equivalent to Theorem~\ref{thm-simplest} for $I, J'$. 

Using that 
\[ W_\Pi (x_{J}) = \frac{X_{J}}{c_\Pi}  = \frac{X_{J}}{ X_{(0,1,\dots,n)}}, \]
the statement of Theorem~\ref{thm-PPconj} for $\Pi, I,J$ is
  \[ \partial_I  \frac{X_J}{c_\Pi}  = 
  \begin{cases}
    \frac{X_I X_J}{c_\Pi^2} & \text{if $\sqrt{x_I x_J}$ exists}, \\
    0         &  \text{otherwise.}
  \end{cases}\]
The statement of Theorem~\ref{thm-simplest} for $I, J'$ is 
  \[ \partial_I  ( X_J c_\Pi )  = 
  \begin{cases}
    X_I X_J & \text{if $\sqrt{X_I X_J}$ exists}, \\
    0         &  \text{otherwise.}
  \end{cases}\]
These two equations differ by a factor of $c_\Pi^2$.
\end{proof}

We will prove Theorem~\ref{thm-simplest} below after some preparations.

\subsection{$\SL(n,k)$-invariance}

Let $A=(\a{i, j})$ be the $n\times (n+1)$ matrix of variables. For a matrix $B\in \SL(n,k)$, consider the linear change of variables from $A$ to $BA$. This defines an action of $\SL(n,k)$ on the polynomial ring $k[\a{i, j}]$. The first fundamental theorem of invariant theory for $\SL(n,k)$ states that if $k$ is an infinite field of any characteristic,  then the $k$-algebra of invariants under this action is generated by $X_0, X_1, \dots, X_n$. When  the field $k$ is finite, the same result holds if we consider absolute invariants. These are polynomials in $k[\a{i,j}]$ that are invariant under the action of $\SL(n,\overline{k})$, where $\overline{k}$ is the algebraic closure of $k$. The first fundamental theorem states that absolute invariants are again polynomials in $X_0, X_1, \dots, X_n$ with coefficients in $k$. (See \cite{Procesi2007}, Theorem~13.5.5 and the discussion of absolute invariants in  Section 13.6.1.)

\begin{lemma} \label{lem-invariance}
Let $I$ and $J$ be as in Theorem~\ref{thm-simplest}. Then the polynomial $\partial_I X_J \in k[\a{i, j}]$ is $\SL(n,k)$-invariant for any field $k$ of characteristic $2$. In particular, $\partial_I X_J \in \FF_2[\a{i, j}]$ is an absolute $\SL(n, \FF_2)$-invariant.
\end{lemma}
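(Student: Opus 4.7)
The plan is to verify $\SL(n,k)$-invariance directly on the elementary generators. Since the coefficients of $\partial_I X_J$ arise as reductions of integers, $\SL(n,k)$-invariance for an arbitrary characteristic-$2$ field $k$ is equivalent to absolute invariance over $\FF_2$, so it suffices to work over an infinite field such as $\overline{\FF_2}$. For such a field, $\SL(n,k)$ is generated by the elementary matrices $B_{p,q}(c) := I + cE_{pq}$ for $p \neq q$ and $c \in k$, and it suffices to check invariance under these.

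Each maximal minor $X_j$ is individually invariant under $B := B_{p,q}(c)$ since $\det B = 1$, hence so is $X_J$. A direct chain-rule computation yields the conjugation formula $B\,\partial_{\a{k,l}}\,B^{-1} = \partial_{\a{k,l}} + c\,\delta_{k,q}\,\partial_{\a{p,l}}$, so only the $k = q$ factor of $\partial_I$ acquires a $c$-correction; multiplying gives
\[
B\,\partial_I\,B^{-1} \;=\; \partial_I \;+\; c\, D, \qquad D \;:=\; \partial_{\a{p, i_p}}\,\partial_{\a{p, i_q}} \prod_{k \neq p, q} \partial_{\a{k, i_k}}.
\]
Applying both sides to the invariant polynomial $X_J$ gives $B \cdot (\partial_I X_J) = \partial_I X_J + c\, DX_J$, so invariance under $B$ for all $c$ is equivalent to the identity $D X_J = 0$. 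The case $i_p = i_q$ is immediate from $\partial_{\a{p, i_p}}^2 = 0$ in characteristic $2$.

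For the remaining case $i_p \neq i_q$, the plan is to Laplace-expand each $X_{j_t}$ along the two-row submatrix of rows $p$ and $q$, writing
\[
X_{j_t} \;=\; \sum_{\substack{S \subset \{0,\ldots,n\}\setminus\{j_t\} \\ |S| = 2}} \omega_S \cdot M_{\{j_t\} \cup S},
\]
where $\omega_S$ is the $2 \times 2$ minor of rows $p, q$ on the columns $S$ and $M_T$ is the complementary $(n-2) \times (n-2)$ minor on the remaining rows. Under this decomposition, $\partial_{\a{p, i_p}}\partial_{\a{p, i_q}}$ acts only on the $\omega$-factors (producing monomials in $\a{q, \cdot}$) while $\prod_{k \neq p, q}\partial_{\a{k, i_k}}$ acts only on the $M$-factors. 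The Leibniz rule selects ordered pairs $(t_1, t_2)$ with $t_1 \neq t_2$, $i_p \in S_{t_1}$, and $i_q \in S_{t_2}$. Configurations where both $(t_1, t_2)$ and its swap $(t_2, t_1)$ satisfy these conditions force $S_{t_1} = S_{t_2} = \{i_p, i_q\}$ and contribute identical terms that cancel in characteristic $2$. The remaining asymmetric contributions should assemble into an explicit multiple of the Laplace identity $\sum_l \a{q, l} X_l = 0$ (the determinant of an $(n+1) \times (n+1)$ matrix with row $q$ repeated), which vanishes in any characteristic.

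The main obstacle is the combinatorial bookkeeping of the asymmetric contributions---tracking how the $(n-2)$ derivatives on the $M$-factors combine with the monomials in $\a{q, \cdot}$ produced by the $\omega$-expansions---so as to recognize the Laplace identity explicitly. The necessity of characteristic $2$ is confirmed in the small example $n = 2$, $I = (0, 1)$, $J = (0, 1, 2)$, $p = 2$, $q = 1$: the same computation over $\QQ$ yields $DX_J = -2\,\a{1,2}^2 X_2 \neq 0$, which becomes zero only after reducing mod $2$.
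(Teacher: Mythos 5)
Your reduction step---passing to elementary matrices $B_{p,q}(c)$ and computing the conjugated operator $B\,\partial_I\,B^{-1} = \partial_I + c\,D$ so that invariance is equivalent to $DX_J = 0$---is essentially the same as the paper's derivation of Equation~(\ref{eq-chain}). Note that you could simplify the target: since partial derivatives in distinct variables commute, $DX_J = 0$ follows immediately from the two-derivative statement $\partial_{\a{p,i_p}}\partial_{\a{p,i_q}} X_J = 0$, and there is no need to track how $\prod_{k\neq p,q}\partial_{\a{k,i_k}}$ interacts with the Laplace expansion. That two-derivative vanishing is precisely the paper's Lemma~\ref{lem-2deriv}, and it is the real content of the argument.

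The genuine gap is that you do not prove this vanishing. You sketch a route (Laplace expansion along rows $p$ and $q$, cancellation of swap-symmetric terms in characteristic $2$, and assembly of the asymmetric remainder into the Laplace identity $\sum_l \a{q,l}X_l = 0$), but you explicitly flag the ``main obstacle''---the combinatorial bookkeeping---and leave it unresolved. Moreover, the plan as stated never invokes the hypothesis that $|J|$ is odd, yet this parity is essential: the identity fails for even $|J|$ (e.g., $n=2$, $J=(0,1)$ gives $DX_J = -\a{1,2}^2 \neq 0$ mod $2$), and in the paper's proof the parity enters because each pair $\{i,j\}$ lies in $|J|-2$ triples, which must be odd for the final cancellation. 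Your $n=2$, $|J|=3$ worked example is consistent with the statement but is a special-case check, not a proof. The paper completes this step by a different route: expressing $\partial_{\a{r,i}}X_j$ as Pl\"ucker coordinates $Y_{i,j}$ on $\Gr(n-1,n+1)$ and using the Pl\"ucker relations~(\ref{eq-Plucker}) together with the flag relation~(\ref{eq-flag}) between $Y_{i,j}$ and $X_p$.
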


\begin{proof}
The group $\SL(n,k)$ is generated by elementary matrices. An elementary matrix acts on the matrix of variables by adding a constant $c$ times row $r$ to row $s$. It suffices to prove invariance under this change of variables. 

We may assume without loss of generality that $r=2$ and $s=1$. Consider the new variables
\[ a'_{i,j} = \begin{cases} 
\a{i, j} + c \a{2, j} & \text{if $i=1$,} \\ 
\a{i, j} & \text{otherwise.}
\end{cases}
\]
For a polynomial $f(a) = f(\a{i, j})$,  let us denote by $f(a')$ the result of substituting $\ap{i,j}$ in $\a{i, j}$. Similarly, let us write $\partial_I(a')$ for the partial derivative where we replace $\partial_{\a{i, j}}$ with $\partial_{a'_{i,j}}$. We need to prove that 
\[ (\partial_I X_J)(a') = (\partial_I X_J) (a). \]
We claim that if $\partial_I = \partial_{\a{1, i_1}}  \partial_{\a{2, i_2}} \partial_{\a{3, i_3}}\cdots \partial_{\a{n, i_n}}$, then 
\begin{equation} (\partial_I X_J)(a') = (\partial_I X_J)(a) - c (\partial_{\a{1, i_1}}  \partial_{\a{1, i_2}} \partial_{\a{3, i_3}} \cdots \partial_{\a{n, i_n}} X_J)(a). \label{eq-chain} \end{equation}
The next lemma shows that $\partial_{\a{1, i_1}}  \partial_{\a{1, i_2}} X_J = 0$, hence the second summand vanishes.

For any polynomial $f(\a{i, j})$, by replacing all symbols $a$ with $a'$, we have  
\[ (\partial_I f)(a')= \partial_I(a') f(a').\]
If $f$ is $\SL(n,k)$-invariant, then 
\[ f(a') = f(a) = f(a'_{1,j} - c a'_{2,j}, a'_{2,j},\ldots, a'_{n,j}).\] 
Let us now prove Equation~(\ref{eq-chain}). Since $X_J$ is $\SL(n,k)$-invariant, 
\[  (\partial_I X_J)(a') = \partial_I(a') X_J(a') = \partial_{\ap{1, i_1}}  \partial_{\ap{2, i_2}} \cdots \partial_{\ap{n, i_n}} X_J (a'_{1,j} - c a'_{2,j}, a'_{2,j},\ldots, a'_{n,j}).\]
Using the chain rule, this derivative is
\[ \big( \partial_I X_J - c \partial_{\a{1, i_1}}  \partial_{\a{1, i_2}}\partial_{\a{3, i_3}} \cdots \partial_{\a{n, i_n}} X_J \big) (a'_{1,j} - c a'_{2,j}, a'_{2,j},\ldots, a'_{n,j}).\]
Changing back to the variables $\a{i,j}$ gives the right hand side of  (\ref{eq-chain}).
\end{proof}

\begin{lemma} \label{lem-2deriv}
If $|J|$ is odd then $\partial_{\a{r, i_1}}  \partial_{\a{r, i_2}} X_J = 0$  for any $r, i_1, i_2$.
\end{lemma}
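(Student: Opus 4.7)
My plan is to exploit the fact that the vector $X(y) := (X_0, \ldots, X_n)$, regarded as a function of $y := (\a{r,0}, \ldots, \a{r,n})$, actually takes values in a fixed 2-dimensional subspace of $K^{n+1}$. Indeed, for every $i \in \{1, \ldots, n\}$ the identity $\sum_j \a{i,j} X_j = 0$ holds as a polynomial identity (this was already used in the proof of Lemma~\ref{lem-Pi}, where it arises from the vanishing of a determinant with a repeated row). Restricting to $i \neq r$ gives $n-1$ linear constraints whose coefficients are independent of $y$, carving out a subspace $W \subset K^{n+1}$; since the coefficient matrix $A_{\hat r}$ has maximal rank $n-1$ over $K$, the subspace $W$ has dimension exactly $2$, and $X(y) \in W$ for every value of $y$.

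Fixing any basis $u = (u_j), v = (v_j)$ of $W$, the $K$-linearity of $y \mapsto X(y)$ then yields linear forms $U, V \in K[y_0, \ldots, y_n]_1$ such that $X_j(y) = u_j U(y) + v_j V(y)$ for every $j$. Consequently
\[
X_J = \prod_{l=1}^s X_{j_l} = \prod_{l=1}^s \bigl(u_{j_l} U + v_{j_l} V\bigr) = P(U, V)
\]
is a homogeneous polynomial of degree $s = |J|$ in only two variables $U, V$, with coefficients in $K$. Because $U$ and $V$ are linear in $y$, the chain rule will reduce $\partial_{\a{r,i_1}} \partial_{\a{r,i_2}} X_J$ to a $K$-linear combination of the three second partials $P_{UU}, P_{UV}, P_{VV}$ evaluated at $(U(y), V(y))$, with no contribution from first partials of $U, V$.

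Writing $P = \sum_{k=0}^s p_k U^k V^{s-k}$, the coefficients appearing in $P_{UU}, P_{UV}, P_{VV}$ are $k(k-1)$, $k(s-k)$, and $(s-k)(s-k-1)$ respectively. The first and third are products of two consecutive integers, hence always even. For $s$ odd, $k$ and $s-k$ have opposite parities, so $k(s-k)$ is also even. Hence all three second partials of $P$ vanish identically in characteristic $2$, giving $\partial_{\a{r,i_1}} \partial_{\a{r,i_2}} X_J = 0$. The step I expect to require the most care is the structural claim that $X(y)$ lives in a 2-dimensional subspace---concretely, verifying that the $n-1$ constraints coming from $i \neq r$ are linearly independent over $K$---whereas once the factorization $X_j = u_j U + v_j V$ is in hand, the remainder is a one-line binomial-parity check combined with the chain rule.
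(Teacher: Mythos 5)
Your proof is correct, and it takes a genuinely different and more elementary route than the paper's. The paper works with the Plücker coordinates $Y_{i,j} = \partial_{\a{1,i}} X_j$ of the hyperplane spanned by rows $2,\ldots,n$ of $A$, and uses two families of relations (Plücker relations among the $Y_{i,j}$ and flag-type relations linking $Y_{i,j}$ to the $X_i$) to rewrite $\partial_{\a{1,1}}\partial_{\a{1,2}}X_J$ as $X_J Y_{1,2}\sum_{\{i,j\}} Y_{i,j}/(X_iX_j)$, and then a combinatorial parity count (each pair lies in an odd number of triples) kills the sum. Your argument replaces this by the dual observation: since the vector $(X_0,\ldots,X_n)$ is annihilated by the $n-1$ rows $A_{\hat r}$, it lives in the $2$-dimensional kernel $W$ of $A_{\hat r}$, whose defining coefficients lie in the subfield $K' = k(\a{i,j} : i\neq r)$, so $X_J$ becomes a binary form $P(U,V)$ of degree $s$ in two linear forms $U,V$ with coefficients in $K'$, and the lemma reduces to the elementary fact that for $s$ odd the integer coefficients $k(k-1)$, $k(s-k)$, $(s-k)(s-k-1)$ in $P_{UU},P_{UV},P_{VV}$ are all even. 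This avoids invoking Plücker and flag relations altogether and makes the role of the parity of $|J|$ completely transparent; the geometric content is the same (your $W$ is dual to the $(n-1)$-plane whose Plücker coordinates the paper uses), but your route is shorter and more self-contained. One small clarification worth making explicit: the coefficients $u_j,v_j$ of the basis of $W$, and hence those of $U,V$ and of $P$, lie in $K'$ rather than $K$ --- this is exactly what guarantees they are constants for $\partial_{\a{r,i}}$, so the chain rule produces no extra terms. The rank claim $\operatorname{rk} A_{\hat r} = n-1$ is immediate since, e.g., the minor on the first $n-1$ columns is a nonzero polynomial in independent variables.
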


\begin{proof}
  It is enough to consider the case where $J$ contains no repeating indices, since any square factors of $X_J$ can be factored out of the partial derivatives. Under a suitable relabelling of rows and columns of $A$, we can assume that $r=1$, $i_1 = 1$ and $i_2 = 2$, so that the derivative under consideration is $\partial_{\a{1, 1}} \partial_{\a{1, 2}}  X_J$.
  
 Let us denote by $Y_{i,j} = Y_{j,i}$ the determinant of the matrix $A$ with its first row and columns $i,j$ removed. Then
 \[ \partial_{\a{1, i}} X_j = Y_{i,j}.\]
 The polynomials $X_i$ and $Y_{i,j}$ satisfy the following relations. For any distinct indices $i,j,p,q$
 \begin{equation} \label{eq-Plucker}
 Y_{i,j} Y_{p,q} - Y_{i,p} Y_{j,q} + Y_{i,q} Y_{j,p} = 0,
 \end{equation}
 and for any distinct indices $i,j,p$
 \begin{equation} \label{eq-flag}
 Y_{i,j} X_p - Y_{i,p} X_{j} + Y_{j,p} X_{i} = 0.
 \end{equation}
 These equations hold in any characteristic. In characteristic $2$ the signs in the equations are not important.
 The equations come from the Pl\"ucker embedding of the Grassmannian. Rows $2,3,\ldots,n$ of the  matrix $A$ span an $(n-1)$-plane in the $(n+1)$-space and hence define a point in the Grassmannian $\Gr(n-1, n+1)$. The polynomials $Y_{i,j}$ are the Pl\"ucker coordinates on this Grassmannian. These coordinates satisfy the Pl\"ucker relations in Equation~(\ref{eq-Plucker}). Similarly, the $n$ rows of the matrix $A$ define a point in $\Gr(n,n+1)$ with coordinates $X_i$. The relations in Equation~(\ref{eq-flag})  state that the $(n-1)$-plane with coordinates $Y_{i,j}$ lies in the $n$-plane with coordinates $X_i$.
 
Using the product rule we have
\[ \partial_{\a{1, 1}}  \partial_{\a{1, 2}} X_J = \sum \frac{X_J}{X_{i}X_{j}} Y_{1,i} Y_{2,j}.\]
Here the sum runs over all vectors $(i,j)$ where $i,j$ are distinct entries of $J$ such that $i\neq 1$ and $j\neq 2$. We claim that this sum is equal to 
\[ \sum \frac{X_J}{X_{i}X_{j}} Y_{1,2} Y_{i,j},\]
  where the sum now runs over all two element subsets $\{i,j\}$ of entries in $J$. To see this, first consider the case where $i$ and $j$ are both distinct from $1$ and $2$. In this case we apply the Pl\"ucker relation to get 
 \[  Y_{1,i} Y_{2,j} + Y_{2,i} Y_{1,j}= Y_{1,2} Y_{i,j}.\]
 The cases where $i=2$ or $j=1$ are simpler and do not require any relation.
 
 We are now reduced to proving that 
 \[  \sum_{\{i,j\}} \frac{X_J}{X_{i}X_{j}} Y_{1,2} Y_{i,j} = X_J Y_{1,2} \sum_{\{i,j\}} \frac{Y_{i,j}}{X_{i}X_{j}}  = 0.\]
Using Equation~(\ref{eq-flag}) we have for any distinct $i,j, p$
\[ \frac{Y_{i,j}}{X_{i}X_{j}} + \frac{Y_{i,p}}{X_{i}X_{p}} +\frac{Y_{j,p}}{X_{j}X_{p}} = 0.\]
Now consider all three element subsets $\{i,j,p\}$ of entries in $J$. Then 
\[ \sum_{\{i,j,p\}} \left( \frac{Y_{i,j}}{X_{i}X_{j}} + \frac{Y_{i,p}}{X_{i}X_{p}} +\frac{Y_{j,p}}{X_{j}X_{p}} \right) = 0.\]
Since every pair $\{i,j\}$ occurs in an odd number of triples $\{i,j,p\}$, this sum is equal to 
\[ \sum_{\{i,j\}} \frac{Y_{i,j}}{X_{i}X_{j}}. \qedhere\]
\end{proof}

\subsection{Proof of Theorem~\ref{thm-simplest}}

Lemma~\ref{lem-invariance} implies that $\partial_I X_J$ is a polynomial in $X_0, X_1, \cdots, X_n$ with coefficients in $\FF_2$.  Consider the grading by $\ZZ^{n+1}$ on the  ring $\FF_2[\a{i, j}]$ such that $\a{i, j}$ has degree $e_j$. Here $e_0,\ldots, e_{n}$ is the standard basis for $\ZZ^{n+1}$. Let $\one = (1,\ldots, 1)$. Then $X_i$, $i=0,\ldots, n$, is homogeneous with
\[ \deg X_i = \one - e_i.\]
Since the vectors $\one - e_i$ are linearly independent, there can be at most one monomial $X_L$ in each degree. 
The partial derivative $\partial_{\a{i,j}}$ applied to a homogeneous polynomial reduces its degree by $e_j$ (or is zero). Since $X_J$ is homogeneous, so is $\partial_I X_J$, hence $\partial_I X_J$ is equal to a constant $c$ times a monomial $X_M$. Here $c\in \FF_2$, hence $\partial_I X_J$ is either $X_M$ or $0$.
Computing the degrees, the monomial $X_M$ must satisfy $X_I X_J = X_M X_{(0,1,\dots,n)}$.
Theorem~\ref{thm-simplest} has two cases depending on whether $\sqrt{X_M}$ exists or not.

\begin{lemma} 
% If $\sqrt{X_M}$ does not exist then $\partial_I X_J = 0$.
If $\partial_I X_J \neq 0$, then $\sqrt{X_M}$ exists.
\end{lemma}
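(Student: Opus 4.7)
The plan is to show that $\partial_I X_J$ is a square in $\FF_2[\a{i,j}]$; unique factorization in this UFD will then force every exponent $\mu_j$ of $X_M = \prod_j X_j^{\mu_j}$ to be even, which is exactly the condition that $\sqrt{X_M}$ exists. In characteristic $2$ a polynomial is a square if and only if all of its first partial derivatives vanish (equivalently, it lies in the image of Frobenius $\FF_2[\a{i,j}] \to \FF_2[\a{i,j}^2]$). So the heart of the proof reduces to checking
\[ \partial_{\a{r,c}}(\partial_I X_J) = 0 \qquad \text{for all } r \in \{1,\ldots,n\},\ c \in \{0,\ldots,n\}. \]

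Since partial derivatives on a polynomial ring commute, I would pull the row-$r$ operator out of $\partial_I$ and write
\[ \partial_{\a{r,c}}(\partial_I X_J) = \Bigl(\prod_{r' \neq r} \partial_{\a{r', i_{r'}}}\Bigr)\bigl(\partial_{\a{r,c}} \partial_{\a{r, i_r}} X_J\bigr). \]
The inner factor is a composition of two row-$r$ derivatives applied to $X_J$: when $c = i_r$ it equals $\partial_{\a{r,c}}^2 X_J = 0$ by the characteristic $2$ identity, and when $c \neq i_r$ it vanishes by Lemma~\ref{lem-2deriv}, whose hypothesis ``$|J|$ odd'' is precisely the hypothesis of Theorem~\ref{thm-simplest}. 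Thus $\partial_I X_J = g^2$ for some $g \in \FF_2[\a{i,j}]$.

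Combined with the multidegree discussion preceding the lemma, which identifies $\partial_I X_J$ as either $0$ or the single monomial $X_M$, this gives $X_M = g^2$ in the nonzero case. The polynomials $X_0, \ldots, X_n$ are pairwise non-associate irreducibles in the UFD $\FF_2[\a{i,j}]$: each is a generic $n \times n$ determinant, hence irreducible by the classical theorem, and $X_j$ and $X_{j'}$ cannot be associate for $j \neq j'$ because $X_j$ involves the variable $\a{r, j'}$ while $X_{j'}$ does not. Unique factorization therefore forces every $\mu_j$ to be even, exhibiting $\sqrt{X_M} = \prod_j X_j^{\mu_j/2}$. The only genuinely nontrivial input is Lemma~\ref{lem-2deriv}, which was just established via the Pl\"ucker relations, so no further obstacle remains.
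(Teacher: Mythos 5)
Your proof is correct and matches the paper's argument essentially step for step: both show $\partial_I X_J$ has vanishing first partials by commuting a fresh row-$r$ derivative past the other factors of $\partial_I$ and invoking Lemma~\ref{lem-2deriv}, conclude from the characteristic $2$ Frobenius criterion that $\partial_I X_J$ is a square, and then use unique factorization among the irreducibles $X_0,\ldots,X_n$ to force $X_M$ to be a square monomial. You make explicit a couple of points the paper leaves tacit (the $c=i_r$ case, and why the $X_j$ are pairwise non-associate), but the route is the same.
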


\begin{proof}
Suppose that $\partial_I X_J = X_M$. By Lemma~\ref{lem-2deriv}, all partial derivatives  
 \[ \partial_{\a{i, j}} X_M = \partial_{\a{i, j}} \partial_I X_J\]
 vanish. This implies that $X_M$ is the square of a polynomial in $\FF_2[\a{i, j}]$. Since $X_M$ is the product of irreducible polynomials $X_i$, it follows that $X_M$ must be the square of a monomial $X_L$.
\end{proof}

\begin{lemma}
% If $\sqrt{X_M} = X_L$ then $\partial_I X_J = X_M$.
If $\sqrt{X_M}$ exists, then $\partial_I X_J \neq 0$.
\end{lemma}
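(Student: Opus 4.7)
The plan is to combine the structural fact already established, namely $\partial_I X_J \in \{0, X_M\}$ with $X_M = X_L^2$ under our hypothesis, with an induction on $|L|$, equivalently on $|J| = 2|L|+1$. The task is to rule out the zero alternative.

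For the base case $|L|=0$ we have $|J|=1$, say $J=(j_0)$, and the multiset identity $X_I X_{j_0} = X_{(0,1,\dots,n)}$ forces the components of $I$ to be a permutation of $\{0,1,\dots,n\}\setminus\{j_0\}$. Then $\partial_I X_{j_0}$ is a single Leibniz differentiation of the determinant $X_{j_0}$; it extracts the coefficient of $\prod_r \a{r, i_r}$, which is the sign of the corresponding permutation and therefore equals $1$ in characteristic $2$.

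For the inductive step, pick $l \in L$; its multiplicity in the multiset $I \cup J$ equals $2 m_l(L)+1 \geq 3$, where $m_l(L)$ is the multiplicity of $l$ in $L$. If $l$ has multiplicity at least $2$ in $J$ (\emph{Case A}), write $X_J = X_l^2 X_{J'}$; since we are in characteristic $2$, $\partial_I X_J = X_l^2 \partial_I X_{J'}$, and the reduced triple $(I, J', L\setminus\{l\})$ satisfies $X_I X_{J'} = X_{L\setminus\{l\}}^2 X_{(0,1,\dots,n)}$, so the inductive hypothesis gives $\partial_I X_{J'} = X_{L\setminus\{l\}}^2$ and hence $\partial_I X_J = X_L^2$. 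The difficult case is when every $l \in L$ appears in $J$ with multiplicity at most $1$ (\emph{Case B}). For such $l$ the multiplicity in $I$ is then at least $2$, so we may choose distinct rows $r_1, r_2$ with $i_{r_1} = i_{r_2} = l$, decompose $\partial_I = \partial_{I^\#} \partial_{\a{r_1, l}} \partial_{\a{r_2, l}}$, and expand $\partial_{\a{r_1, l}} \partial_{\a{r_2, l}} X_J$ via the Leibniz rule. The diagonal contributions vanish because $\partial_{\a{r_1, l}} X_{j_k}$ is an $(n-1)\times(n-1)$ minor with column $l$ already removed, hence is independent of $\a{r_2, l}$; the surviving off-diagonal terms form a sum indexed by ordered pairs $k_1 \neq k_2$ of products $Y_{r_1, l, j_{k_1}} Y_{r_2, l, j_{k_2}} \prod_{k\neq k_1, k_2} X_{j_k}$, which one rearranges using the Pl\"ucker-type identities~(\ref{eq-Plucker}) and~(\ref{eq-flag}) that powered Lemma~\ref{lem-2deriv}, with the aim of extracting a factor of $X_l^2$ and exposing a smaller instance to which the inductive hypothesis applies.

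The main obstacle is Case B. Unlike Case A, no $X_l^2$ factor is literally present in $X_J$; it has to be produced through Pl\"ucker manipulations that mix minors living in different rows, which is combinatorially delicate. A natural auxiliary reduction I would pursue in parallel is to first establish that the value of $\partial_I X_J$ depends only on the multiset $I \cup J$ (a consequence of the $\SL(n, \FF_2)$-invariance of Lemma~\ref{lem-invariance} together with the uniqueness of $X_M$ from the $\ZZ^{n+1}$-grading), which would let us always choose a splitting landing in Case A whenever $2|L| \leq n$; the residual range $2|L| > n$ would still require the Pl\"ucker rearrangement and is the genuine hard core of the statement.
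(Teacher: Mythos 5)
Your proposal is not complete, and the gap is exactly where you say it is: Case B. Your base case and Case A are correct (Case A is essentially the paper's preliminary step of factoring out all squares from $X_J$), but after that reduction every $l\in L$ appears at most once in $J$, so you are always in Case B, and your Pl\"ucker rearrangement is never actually carried out. Your auxiliary reduction does not close the gap either: the $\SL(n,\FF_2)$-invariance together with the $\ZZ^{n+1}$-grading only show that $\partial_I X_J \in \{0, X_M\}$, where $X_M$ depends only on the multiset $I\cup J$; they say nothing about whether the constant in $\FF_2$ is $0$ or $1$ for a given split. So the claim that $\partial_I X_J$ ``depends only on the multiset $I\cup J$'' is not a consequence of those two facts --- it is essentially equivalent to the statement you are trying to prove.

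The paper's proof avoids this combinatorial wall by inducting on the dimension $n$ rather than on $|L|$. After assuming $J$ has no repeated entries (your Case A reduction), it uses the degree count --- $X_{(0,1,\dots,n)}$ has degree $n+1$ but $X_I$ only degree $n$ --- to find an index $j_1\in J$ not appearing in $I$. It then extracts the coefficient of $\a{1,j_1}^{s-1}$ in $\partial_I X_J$: since $X_{j_1}$ is free of $\a{1,j_1}$, the monomial $\a{1,i_1}\a{1,j_1}^{s-1}$ is picked out uniquely from the Leibniz expansion, and the resulting coefficient is a product of first-row cofactors $Y_{j_1,i_1}Y_{j_2,j_1}\cdots Y_{j_s,j_1}$. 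Applying $\partial_{\a{2,i_2}}\cdots\partial_{\a{n,i_n}}$ to this product is precisely an instance of the theorem for the $(n-1)\times n$ matrix obtained by deleting row $1$ and column $j_1$, with $I'=(i_2,\dots,i_n)$ and $J'=(i_1,j_2,\dots,j_s)$, and the divisibility $X_{I'}X_{J'}=X_{L}^2 X_{(0,\dots,n-1)}$ is checked directly. This dimension-drop sidesteps the need to manipulate products of $Y$'s in two different rows, which is what makes your Case B hard. If you want to rescue your approach, you should look for this kind of coefficient-extraction and dimension-reduction step instead of the Pl\"ucker rearrangement.
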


\begin{proof}
We will prove that $\partial_I X_J \neq 0$ by induction on $n$. 
The base of the induction is $n=0$. In this case $\partial_I X_J = X_J =1$.

Consider now $n>0$.  Let $I=(i_1,\ldots, i_n)$ and $J=(j_1,\ldots,j_s)$, where $s$ is odd. By assumption, there exists a monomial $X_L$ such that $X_I X_J = X_L^2 X_{(0,1,\ldots,n)}$.  To prove that $\partial_I X_J \neq 0$, we may factor out squares in $X_J$ and assume that $J$ has no repeated entries.

There exists an entry in $J$, say $j_1$, such that $j_1 \neq i_r$ for $r=1,\ldots, n$. This follows from the fact that $X_{(0,1,\ldots,n)}$ of degree $n+1$ divides $X_I X_J$, but $X_I$ has degree $n$. Define the monomial  
  \[ \mu = \a{1,i_1} \a{1,j_1}^{s-1}.\]
Then the coefficient of $\mu$ in $X_J=X_{j_1} \cdots X_{j_s}$ is 
\[  Y_{j_1, i_1} Y_{j_2, j_1} \cdots Y_{j_s,j_1}.\]
Indeed, $X_{j_1}$ does not contain $\a{1,j_1}$. Hence $\a{1,j_1}^{s-1}$ must come from the factors $X_{j_2}, \ldots, X_{j_s}$  and $\a{1,i_1}$ from the factor $X_{j_1}$. As before, we have denoted by $Y_{i,j}$ the coefficient of $\a{1,j}$ in $X_i$.

Notice that the coefficient of $\a{1,j_1}^{s-1}$ in $\partial_I X_J$ is 
\[ \partial_{\a{2,i_2}} \partial_{\a{3,i_3}} \cdots \partial_{\a{n,i_n}} Y_{j_1, i_1} Y_{j_2, j_1} \cdots Y_{j_s,j_1}.\]
If this coefficient is nonzero then also $\partial_I X_J$ is nonzero. We claim that this coefficient being nonzero follows by induction from the case of dimension $n-1$.  From the matrix $(\a{i,j})$ we have removed row $1$ and column $j_1$, the derivative $\partial_I$ is replaced with $\partial_{I'}$, where $I' = (i_2,\ldots, i_n)$, and $X_J$ is replaced (using a similar notation in dimension $n-1$) with $X_{J'}$, where $J'=(i_1, j_2, \ldots, j_s)$. 

Let us check that we can apply the induction assumption to prove that $\partial_{I'} X_{J'} \neq 0$. The vectors $I'$ and $J'$ satisfy
\[ X_{I'} X_{J'} = \frac{X_I}{X_{i_1}} \cdot \frac{ X_{i_1} X_J }{ X_{j_1} } = \frac{X_L^2 X_{(0,1,\dots,n-1,n)}}{X_{j_1}}  \]
Since we assumed that $J$ does not contain repeated entries, $X_{j_1}$ appears in the numerator of the fraction to the first power. If we suppose that $j_1 = n$, then
\[ X_{I'} X_{J'} = X_L^2 X_{(0,1,\dots, n-1)}, \]
where $X_{j_1} = X_n$ does not appear in any monomial. By induction, $\partial_{I'} X_{J'} \neq 0$.
\end{proof}

\section{Anisotropy in characteristic $0$}
\label{sec-consec}

In this section we prove Theorem~\ref{thm-2to0}. Let $K=\FF_2(\a{i, j})$ and $K_0 = \QQ(\a{i, j})$. We write $H(\Delta)_K$ and $H(\Delta)_{K_0}$ for the algebras defined over the fields $K$ and $K_0$, respectively. Similarly for $\cA(\Delta)_K$ and $\cA(\Delta)_{K_0}$.

\begin{lemma}
Let $\Delta$ be a homology sphere over $K$, and let $B$ be a set of monomials in $x_i$ that forms a basis for the vector space $H(\Delta)_K$. Then $B$ also forms a basis for $H(\Delta)_{K_0}$.
\end{lemma}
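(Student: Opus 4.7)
The plan is to reduce the problem to a rank-preservation argument for a matrix whose entries are rational functions with integer coefficients in the variables $\a{i,j}$, so that the fact that $B$ is already a basis over $K = \FF_2(\a{i,j})$ transfers to $K_0 = \QQ(\a{i,j})$.

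Since $B$ consists of monomials it is a disjoint union $B = \bigsqcup_m B_m$ of its graded pieces, and it suffices to show that each $B_m$ is a basis of $H^m(\Delta)_{K_0}$. First, I would match dimensions. The homology-sphere condition depends only on $\chara k$, so $\Delta$ is a homology sphere over $\FF_2$, and by the lemma at the start of Section~\ref{sec-SR} it is also a homology sphere over $\QQ$. Reisner's theorem therefore applies in both characteristics, and for a generic linear system of parameters the graded quotient has Hilbert function equal to the combinatorial $h$-vector of $\Delta$. Hence
\[ |B_m| \;=\; \dim_K H^m(\Delta)_K \;=\; h_m(\Delta) \;=\; \dim_{K_0} H^m(\Delta)_{K_0},\]
and it is enough to prove that $B_m$ is linearly independent in $H^m(\Delta)_{K_0}$.

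Next, I would set up a rank criterion using the Poincar\'e pairing, which is nondegenerate in both characteristics by the Gorenstein property. Let $x_{L_1},\ldots,x_{L_r}$ be the monomials of degree $n-m$ in $x_1,\ldots,x_N$, and form the matrix $N$ with rows indexed by $b \in B_m$ and columns by the $x_{L_j}$, with entries $N_{b,L} = W_\Delta(b\cdot x_L)$. Then $B_m$ is linearly independent in $H^m(\Delta)_F$ if and only if $\operatorname{rank}_F N = h_m$, for each of $F = K, K_0$. The key input is that Theorem~\ref{thm-W-sum} together with Lemma~\ref{lem-Pi} gives the universal formula
\[ W_\Delta(b\cdot x_L) \;=\; \sum_i \frac{\ev_{\Pi_i}(b\cdot x_L)}{c_{\Pi_i}},\]
in which each $\ev_{\Pi_i}(b\cdot x_L)$ is a product of the integer-coefficient minors $X_j$ (hence an element of $\ZZ[\a{i,j}]$) and each $c_{\Pi_i}$ is itself such a product. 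Setting $D = \prod_i c_{\Pi_i}$, the scaled matrix $DN$ lies in $\Mat(\ZZ[\a{i,j}])$; since $D$ has nonzero reduction modulo $2$ it is nonzero in both $K$ and $K_0$, so $\operatorname{rank}_F DN = \operatorname{rank}_F N$ for each $F$.

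Finally, I would transfer the rank from characteristic $2$ to characteristic $0$. Since $B_m$ is a basis of $H^m(\Delta)_K$, one has $\operatorname{rank}_K DN = h_m$, so some $h_m \times h_m$ minor of $DN$ is nonzero in $\FF_2[\a{i,j}]$. That minor is a polynomial in $\ZZ[\a{i,j}]$ whose mod-$2$ reduction is nonzero; hence it is itself nonzero in $\ZZ[\a{i,j}]$, and remains nonzero in $\QQ[\a{i,j}] \subset K_0$. Therefore $\operatorname{rank}_{K_0} DN \geq h_m$; the reverse inequality is automatic from the number of rows, so $\operatorname{rank}_{K_0} N = h_m$, giving linear independence of $B_m$ over $K_0$ and completing the argument. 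The main point requiring care is making the $\ZZ$-coefficient structure of $W_\Delta$ explicit enough that the mod-$2$ specialization argument is justified; once the connected-sum decomposition of Theorem~\ref{thm-W-sum} and the closed-form evaluation of Lemma~\ref{lem-Pi} are in hand this is immediate, and the rest of the proof is a routine rank-preservation observation.
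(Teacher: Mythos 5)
Your proof is correct, but it takes a genuinely different route from the paper's. The paper argues at the level of the Stanley--Reisner ring $\cA(\Delta)$ rather than the quotient $H(\Delta)$: since $\cA(\Delta)_K$ is free over $K[\theta_1,\ldots,\theta_n]$, the products of elements of $B$ with $\theta$-monomials form a basis of $\cA(\Delta)_K$; the paper then shows this same set is a basis of $\cA(\Delta)_{K_0}$ by observing that a $K_0$-linear relation, after clearing denominators so that the coefficients lie in $\ZZ[\a{i,j}]$ and are not all divisible by $2$, would reduce mod $2$ to a nontrivial relation over $K$. Since dimensions of $\cA(\Delta)$ are field-independent (the ideal $I_\Delta$ is monomial), linear independence suffices, and freeness of $\cA(\Delta)_{K_0}$ over $K_0[\theta_1,\ldots,\theta_n]$ with basis $B$ follows, hence $B$ is a basis of $H(\Delta)_{K_0}$. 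This avoids invoking the lemma from Section~\ref{sec-SR}, Reisner's criterion over $\QQ$, the Gorenstein property of $H(\Delta)_{K_0}$, and the explicit mixed volume formula. Your argument instead works directly in the Artinian quotient, needs all of those inputs, and replaces the paper's ``lift a relation and reduce'' step with a rank-preservation argument on the Poincar\'e pairing matrix, using Theorem~\ref{thm-W-sum} and Lemma~\ref{lem-Pi} to make the $\ZZ[\a{i,j}]$-structure of $W_\Delta$ explicit. What your version buys is a nice illustration of how the connected-sum decomposition of $W_\Delta$ controls specialization of the full pairing, not just the quadratic form $Q_l$; what it costs is that it presupposes more (in particular, $\Delta$ being a homology sphere over $\QQ$ and the resulting Gorenstein structure), whereas the paper's argument derives these facts essentially for free from the characteristic-$2$ case.
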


\begin{proof}
Since $\cA(\Delta)_K$ is a graded free $K[\theta_1,\ldots,\theta_n]$-module, the product of elements of $B$ with monomials in $\theta_i$ gives a basis for  $\cA(\Delta)_K$. We claim that the same set of products of elements of $B$ with monomials in $\theta_i$ is also a basis for $\cA(\Delta)_{K_0}$. For this it suffices to prove linear independence, because the dimension of $\cA(\Delta)$ in each degree is independent of the field.
If there is a relation between these elements with coefficients in $K_0$, we may clear denominators and assume that the coefficients lie in $\ZZ[\a{i, j}]$  so that not all coefficients are  divisible by $2$. Such a relation gives a nontrivial relation mod $2$. 

This proves that $\cA(\Delta)_{K_0}$ is a free $K_0[\theta_1,\ldots,\theta_n]$-module with basis $B$. Hence $B$ gives a basis for  $H(\Delta)_{K_0}$.
\end{proof}

\begin{proof}[Proof of Theorem~\ref{thm-2to0}]
Let $B_m$ be a basis of monomials for $H^m(\Delta)_K$ as in the lemma. Suppose $g\in H^m(\Delta)_{K_0}$ is nonzero and $Q_l(g)=0$. We may clear denominators and assume that $g$ is a linear combination of monomials in $B_m$ with coefficients in $\ZZ[\a{i, j}]$, not all coefficients divisible by $2$. This $g \pmod 2$ gives a nonzero element $\bar{g}\in H^m(\Delta)_{K}$. Moreover, $Q_l(\bar{g}) = Q_l(g) \pmod 2 = 0$. This contradicts Theorem~\ref{thm-lef}.
\end{proof}

Theorem~\ref{thm-2to0} does not extend to arbitrary orientable pseudo-manifolds $\Delta$. An example where the specialization argument fails is where $\Delta$ is a homology sphere over $\QQ$ but not over $\FF_2$. In this case $\Delta$ is still an orientable pseudo-manifold over $\FF_2$, but anisotropy for $\overline{H}(\Delta)_K$ does not imply anisotropy for $\overline{H}(\Delta)_{K_0} = H(\Delta)_{K_0}$.

\bibliographystyle{plain}
\bibliography{WL}{}

\end{document}